        \newtheorem{thm}{Theorem}[section]
          \newtheorem{cor}[thm]{Corollary}
          \newtheorem{lem}[thm]{Lemma}
          \newtheorem{prop}[thm]{Proposition}
        \theoremstyle{definition}
          \newtheorem{rem}{Remark}
          \newcommand\M{{\mathcal M}}
          \newcommand\E{{\mathcal E}}
           \newcommand\G{{\mathcal G}}
          \newcommand\im{\mathrm{Im}}
          \newcommand\Pic{\mathrm{Pic}}
          \newcommand\W{\mathcal W}
          \newcommand\oo{\mathcal O}
          \newcommand\Ext{\mathrm{Ext}}
          \newcommand\Hom{\mathrm{Hom}}
          \newcommand\Cliff{\mathrm{Cliff}}
          \newcommand\rk{\mathrm{rk}}
\title
{Green's Conjecture for curves on rational surfaces with an anticanonical pencil}
\author{Margherita Lelli--Chiesa}
\address{Humboldt Universit\"at zu Berlin, Institut f\"ur Mathematik, 10099 Berlin}
\email{lelli@math.hu-berlin.de}
\begin{document}
\begin{abstract}
Green's conjecture is proved for smooth curves $C$ lying on a rational surface $S$ with an anticanonical pencil, under some mild hypotheses on the line
bundle $L=\oo_S(C)$. Constancy of Clifford dimension, Clifford index and gonality of curves in the linear system $\vert L\vert$ is also obtained.
\end{abstract}
\maketitle
\section{Introduction}
Green's Conjecture concerning syzygies of canonical curves was first stated in \cite{koszul} and proposes a generalization of Noether's Theorem and the Enriques-Babbage Theorem in terms of Koszul cohomology, predicting that for a curve $C$
\begin{equation}\label{con}
K_{p,2}(C,\omega_C)=0\qquad \textrm{ if and only if }p<\Cliff(C).
\end{equation}
Quite remarkably, this would imply that the Clifford index of $C$ can be read off the syzygies of its canonical embedding. The implication $K_{p,2}(C,\omega_C)\neq0$ for $p\geq\Cliff(C)$ was immediately achieved by Green and Lazarsfeld (\cite[Appendix]{koszul}) and the conjectural part reduces to the vanishing $K_{c-1,2}(C,\omega_C)=0$ for $c=\Cliff(C)$, or equivalently, $K_{g-c-1,1}(C,\omega_C)=0$. 

One naturally expects the gonality $k$ of $C$ to be also encoded in the vanishing of some Koszul cohomology groups. In fact, Green-Lazarsfeld's Gonality Conjecture predicts that any line bundle $A$ on $C$ of sufficiently high degree satisfies
\begin{equation}\label{go}
K_{p,1}(C,A)=0\qquad\textrm{ if and only if }p\geq h^0(C,A)-k.
\end{equation}
Green (\cite{koszul}) and Ehbauer (\cite{ehbauer}) have shown that the statement holds true for any curve of gonality $k\leq 3$. As in the case of Green's Conjecture, one implication is well-known (cf. \cite[Appendix]{koszul}); it was proved by Aprodu (cf. \cite{marian1}) that the conjecture is thus equivalent to the existence of a non-special globally generated line bundle $A$ such that $K_{h^0(C,A)-k,1}(C,A)=0$.

Both Green's Conjecture and Green-Lazarsfeld's Gonality Conjecture are in their full generality still open. However, by specialization to curves on $K3$ surfaces, they were proved for a general curve in each gonality stratum of $M_g$  by Voisin and Aprodu (cf. \cite{evenv,oddv,marian}). Combining this with an earlier result of Hirschowitz and Ramanan (cf. \cite{ramanan}), the two conjectures follow for any curve of odd genus $g=2k-3$ and maximal gonality $k$. 

In \cite{marian}, Aprodu provided a sufficient condition for a genus $g$ curve $C$ of gonality $k\leq(g +2)/2$ to satisfy both conjectures; this is known as the \emph{linear growth condition} and is expressed in terms of the Brill-Noether theory of $C$ only:
\begin{equation}\label{mmm}
\dim W^1_{d}(C)\leq d-k\qquad \textrm{  for }k\leq d\leq g-k+2.
\end{equation}
Aprodu and Farkas (\cite{aprodu}) used the above characterization in order to establish Green's Conjecture for smooth curves lying on arbitrary $K3$ surfaces. It is natural to ask whether a similar strategy can solve Green's Conjecture for curves lying on anticanonical rational surfaces, since these share some common behaviour with $K3$ surfaces. The situation gets more complicated because such a surface $S$ is in general non-minimal and its canonical bundle is non-trivial; in particular, given a line bundle $L\in \Pic(S)$, smooth curves in the linear system $\vert L\vert$ do not form a family of curves with constant syzygies, as it happens instead in the case of $K3$ surfaces. Our main result is the following:
\begin{thm}\label{veloce}
Let $S$ be a smooth, projective, rational surface with an anticanonical pencil and let $L$ be a line bundle on $S$ such that $L\otimes \omega_S$ is nef and big. In the special case where $h^0(S,\omega_S^\vee)=\chi(S,\omega_S^\vee)=2$, also assume that the Clifford index of a general curve in $\vert L\vert$ is not computed by the restriction of the anticanonical bundle $\omega_S^\vee$.

Then, any smooth, irreducible curve $C\in\vert L\vert$ satisfies Green's Conjecture.
\end{thm}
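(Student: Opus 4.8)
The plan is to reduce Green's Conjecture to Aprodu's linear growth condition, which we may do because the hypotheses force the gonality to be in the range $k\le (g+2)/2$. So I need to verify that for a general (hence, by the claimed constancy, every) smooth curve $C\in|L|$ one has $\dim W^1_d(C)\le d-k$ for all $k\le d\le g-k+2$, where $g$ is the genus of $C$ and $k$ its gonality. The key geometric input is the anticanonical pencil: the adjunction formula gives $\omega_C=(L\otimes\omega_S)|_C$, and since $L\otimes\omega_S$ is nef and big, Serre duality and Kawamata--Viehweg-type vanishing on $S$ should control $h^1$ of the relevant bundles, which is what makes the Brill--Noether loci on $C$ tractable.

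The heart of the argument will be a lifting result: given a pencil $A\in W^1_d(C)$ violating linear growth, I want to show that $A$, or rather the associated $g^1_d$, is cut out by a line bundle on the surface $S$. Concretely, if $\{W^1_d(C)\}$ has dimension exceeding $d-k$, then the corresponding linear systems move in a family that is too large to come from $C$ alone; I would run a Green--Lazarsfeld/Paranjape--Ramanan style argument to produce an effective divisor class $M$ on $S$ with $M\cdot C$ and $(L-M)\cdot C$ both small, so that $M|_C$ (or its residual) computes a pencil on $C$ of the offending degree. The anticanonical pencil enters here in two ways: it lets me bound the Picard lattice contributions, and in the degenerate case $h^0(S,\omega_S^\vee)=\chi(S,\omega_S^\vee)=2$ the anticanonical restriction itself can furnish such a pencil, which is exactly why that case is excluded by hypothesis unless $\Cliff(C)$ is not computed by $\omega_S^\vee|_C$.

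Before any of this I must establish the constancy statement advertised in the abstract: the Clifford dimension, Clifford index, and gonality are constant across $|L|$. The natural route is to show that any line bundle computing the Clifford index of a general $C\in|L|$ extends to, or is induced by, a line bundle on $S$ (again via a lifting/restriction argument using the nef and big hypothesis on $L\otimes\omega_S$ to guarantee surjectivity of restriction maps $H^0(S,-)\to H^0(C,-)$). Constancy then follows because the computing bundle varies in an algebraic family over $|L|$, so its Clifford index is upper semicontinuous and, being realized by a surface class, also lower semicontinuous. This constancy is what allows me to prove Green's Conjecture for \emph{every} smooth irreducible $C\in|L|$ rather than just a general one: Aprodu's theorem together with semicontinuity of Koszul cohomology propagates the vanishing $K_{g-c-1,1}(C,\omega_C)=0$ from the general member, provided $c=\Cliff(C)$ is constant.

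The main obstacle I anticipate is precisely the lifting step in the presence of the non-trivial canonical bundle. On a $K3$ surface $\omega_S$ is trivial, so Lazarsfeld's vector-bundle technique (Lazarsfeld--Mukai bundles) directly produces surface line bundles from pencils on $C$; here the non-minimality of $S$ and the twisting by $\omega_S$ mean that the Lazarsfeld--Mukai bundle may be unstable or its sections may not descend cleanly, so I expect to spend most of the effort controlling the instability and handling the boundary degrees $d=k$ and $d=g-k+2$. The special case $h^0(S,\omega_S^\vee)=2$, where the anticanonical system is itself a pencil, is the delicate one: there the lifted pencil can coincide with $\omega_S^\vee|_C$, the linear growth condition can genuinely fail, and this is exactly why the theorem carries the extra hypothesis that $\Cliff(C)$ not be computed by $\omega_S^\vee|_C$. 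Verifying that this single exclusion is sufficient—i.e.\ that no \emph{other} family of offending pencils survives—will require a careful case analysis of the possible surface classes $M$, and this is where the bulk of the genuine work lies.
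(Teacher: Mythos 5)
Your outline for the \emph{general} curve in $\vert L\vert_s$ is essentially the paper's: reduce to Aprodu's linear growth condition and verify it by showing that pencils $A$ with non-injective Petri map force the associated Lazarsfeld--Mukai-type bundle $E_{C,A}$ to be unstable, then bound the dimension of the resulting loci via the destabilizing line bundles $M,N$ on $S$. But your mechanism for passing from the general curve to \emph{every} curve in $\vert L\vert_s$ contains a genuine gap, and it is precisely the step where the whole difficulty of the theorem lies. You propose to ``propagate the vanishing $K_{g-c-1,1}(C,\omega_C)=0$ from the general member'' by semicontinuity of Koszul cohomology. Semicontinuity goes the wrong way: the dimension of a Koszul cohomology group is \emph{upper} semicontinuous in a flat family, so the vanishing locus is open. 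Vanishing at a special member implies vanishing at the general one, never the converse. This is exactly why curves on these surfaces are harder than curves on $K3$ surfaces (where the syzygies are constant in the linear system), and the paper's proof does something quite different: it sets $N:=L\otimes\omega_S$ (nef and big by hypothesis) and applies Green's hypersurface restriction theorem, whose hypotheses are checked using rationality of $S$ and Kawamata--Viehweg vanishing, to produce for \emph{every} $C\in\vert L\vert_s$ an isomorphism $K_{g-c-1,1}(C,\omega_C)\simeq K_{g-c-1,1}(S,L\otimes\omega_S)$. The right-hand side does not depend on $C$ at all, so the vanishing established for a general curve holds identically for all curves; no semicontinuity of syzygies is invoked.

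Two secondary points. First, obtaining that isomorphism requires killing $K_{g-c-2,2}(S,L^\vee,L\otimes\omega_S)$ via Green's Vanishing Theorem, which needs the numerical bound $g-c-2\geq h^0(S,L\otimes\omega_S^{2})$; it is in verifying this bound --- using that $\omega_S^\vee\otimes\oo_C$ contributes to the Clifford index --- that the extra hypothesis in the case $h^0(S,\omega_S^\vee)=\chi(S,\omega_S^\vee)=2$ is actually consumed. Your proposal locates that hypothesis in the lifting/instability analysis instead, but the paper's Proposition on general curves needs no such hypothesis: pencils of the form $\omega_S^\vee\otimes\oo_C$ are isolated in $W^1_d(C)$ and never create dominating components of excess dimension. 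Second, your logical order for constancy is reversed relative to what can be proved: you want to establish constancy of the Clifford index and gonality first (by lifting the Clifford-computing bundle to $S$, itself a substantial unproved claim) and then deduce Green for all curves; in the paper, constancy is a \emph{consequence} of the curve-independent description of $K_{g-c-1,1}(C,\omega_C)$, combined with the Green--Lazarsfeld nonvanishing $K_{p,1}(C,\omega_C)\neq 0$ for $p\leq g-\Cliff(C)-2$ and semicontinuity of the Clifford index (not of Koszul groups). Even if you granted yourself constancy of $\Cliff$, your propagation step would still fail for the reason above.
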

With no hypotheses on the line bundle $L$, we obtain Green's Conjecture and Green-Lazarsfeld's Gonality Conjecture for a general  curve in $\vert L\vert_s$, where $\vert L\vert_s$ denotes the locus of smooth and irreducible curves in the linear system $\vert L\vert$  (cf. Proposition \ref{general}). For later use, we denote by $g(L):=1+(c_1(L)^2+c_1(L)\cdot K_S)/2$ the genus of any curve in $\vert L\vert_s$.

Examples of surfaces as in Theorem \ref{veloce} are given by all rational surfaces $S$ whose canonical divisor satisfies $K_S^2>0$, or equivalently, having Picard number $\rho(S)\leq 9$, such as Del Pezzo surfaces ($-K_S$ is ample), generalized Del Pezzo surfaces ($-K_S$ is nef and big), some blow-ups of Hirzebruch surfaces. However, the class of surfaces that we are considering also includes surfaces $S$ with $K_S^2\leq 0$, such as rational elliptic surfaces (i.e., smooth complete complex surfaces that can be obtained  by blowing up $\mathbb{P}^2$ at $9$ points, which are the base locus of a pencil of cubic curves with at least one smooth member). 

We also obtain the following:
\begin{thm}\label{cli}
Assume the same hypotheses as in Theorem \ref{veloce} and let $g(L)\geq 4$. Then, all curves in $\vert L\vert_s$ have the same Clifford dimension $r$, the same Clifford index and the same gonality. Moreover, if the curves in $\vert L\vert_s$ are exceptional, then one of the following occurs:
\begin{itemize}
\item[(i)] $r=2$ and any curve in $\vert L\vert_s$ is the strict transform of a smooth, plane curve under a morphism $\phi:S\to\mathbb{P}^2$ which is the composition of finitely many blow-ups.
\item[(ii)] $r=3$ and $S$ can be realized as the blow-up of a normal cubic surface $S'\subset \mathbb{P}^3$ at a finite number of points (possibly infinitely near); any curve in $\vert L\vert_s$ is the strict transform under the blow-up map of a smooth curve in $\vert -3 K_{S'}\vert$.
\end{itemize}
\end{thm}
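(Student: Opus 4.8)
The plan is to reduce everything to the behaviour of a single line bundle on the surface. First I would show that the Clifford index of any $C\in\vert L\vert_s$ is computed by the restriction $M\vert_C$ of a line bundle $M\in\Pic(S)$, using the lifting techniques for anticanonical rational surfaces developed above (the analogue of the Green--Lazarsfeld and Donagi--Morrison extension results, where the nef and big hypothesis on $L\otimes\omega_S$ provides the vanishing needed to run adjunction). Restricting the exact sequence $0\to\oo_S(M-L)\to\oo_S(M)\to M\vert_C\to 0$ and applying Kawamata--Viehweg vanishing, I would compute $h^0(C,M\vert_C)$ and $\deg(M\vert_C)=L\cdot M$ purely in terms of intersection numbers on $S$ and of $\chi(\oo_S)=1$; hence
\[
\Cliff(M\vert_C)=L\cdot M-2\bigl(h^0(S,M)-1\bigr)
\]
is independent of the chosen curve. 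Minimising over the finitely many admissible $M$ then yields constancy of the Clifford index, while the Clifford dimension equals $h^0(S,M)-1$ and is therefore also constant. The gonality is treated in the same way, by lifting a pencil computing it to $S$ and invoking the standard relation between gonality and Clifford index, so that it too is constant along $\vert L\vert_s$.

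Assume now that the general curve is exceptional, so that the computing bundle $M$ satisfies $r:=h^0(S,M)-1\geq 2$ and $L\cdot M-2r=\Cliff(C)$. I would extract numerical constraints on $M$ from the Hodge index theorem applied to $M$ and $L-M$, together with the inequalities $h^0(M),h^0(L-M)\geq 2$ forced by exceptionality; this pins down $M^2$, $M\cdot K_S$ and $M\cdot L$. The key point is that exceptionality makes $M$ essentially base-point free and defining a birational morphism $\phi_M$ onto its image $\Sigma\subset\mathbb{P}^r$ of degree $M^2$, once the finitely many $(-1)$-curves on which $M$ is trivial are contracted.

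In the case $r=2$ I would show $M^2=1$, so that $\phi_M$ contracts a configuration of curves and realises $S$ as a composition of blow-ups $\phi\colon S\to\mathbb{P}^2$; the identity $M\vert_C=(\phi^*\oo_{\mathbb{P}^2}(1))\vert_C$ then exhibits $C$ as the strict transform of a smooth plane curve, which gives (i). In the case $r=3$ I would show $M^2=3$ and that $\phi_M$ maps $S$ birationally onto a normal cubic surface $S'\subset\mathbb{P}^3$ with $M=\phi^*(-K_{S'})$; since $L$ then corresponds to $\phi^*(-3K_{S'})$, the curve $C$ is the strict transform of a smooth member of $\vert -3K_{S'}\vert$, giving (ii). Finally I would rule out $r\geq 4$ by showing that the numerical profile forced on $M$ — in particular the bound on $M^2$ coming from Hodge index and from the presence of an anticanonical pencil — is incompatible with $h^0(S,M)\geq 5$, in parallel with the Eisenbud--Lange--Martens--Schreyer analysis of exceptional curves.

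The main obstacle I anticipate is the first step: controlling the lifting on a \emph{non-minimal} surface with non-trivial canonical bundle. Unlike the $K3$ case, the correction terms involving $M\cdot K_S$ must be tracked throughout, and one must verify that the extended bundle $M$ can be chosen uniformly for all $C\in\vert L\vert_s$ rather than curve by curve; this is exactly where the hypothesis that $L\otimes\omega_S$ is nef and big enters, together with the special assumption in the case $h^0(S,\omega_S^\vee)=\chi(S,\omega_S^\vee)=2$, the latter being needed to exclude the restriction of $\omega_S^\vee$ as an exceptional competitor that would compute the Clifford index in a way not governed by the generic $M$.
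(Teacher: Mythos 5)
Your proposal has a genuine gap at its foundation. Everything rests on the claim that for \emph{every} curve $C\in\vert L\vert_s$ the Clifford index (and a gonality pencil) is computed by the restriction of a line bundle $M\in\Pic(S)$ --- a Donagi--Morrison type lifting statement. No such result is proved in this paper, and it cannot be waved in as ``the analogue of the Green--Lazarsfeld extension results'': what is actually proved (Lemma \ref{ll}, Proposition \ref{conto}) is much weaker, namely dimension bounds on components of $\W^1_d(\vert L\vert)$ dominating $\vert L\vert$, obtained from Harder--Narasimhan filtrations of the bundles $E_{C,A}$; these concern only the \emph{general} curve, and even for it they produce a lifted bundle computing the Clifford index only in the exceptional case, where equality in (\ref{yuppi}) forces $N\otimes\oo_C$ to compute $\Cliff(C)$. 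Worse, curve-by-curve lifting is genuinely \emph{false} in this setting: in Knutsen's degree-one Del Pezzo examples (Remark \ref{finale}), which satisfy all hypotheses except the extra condition when $h^0(S,\omega_S^\vee)=\chi(S,\omega_S^\vee)=2$, the curves through the base point $p$ of the anticanonical pencil have gonality and Clifford index strictly smaller than the general curve, their gonality pencil being of the form $\omega_S^\vee\otimes\oo_C(-p)$. Since your formula $\Cliff(M\vert_C)=L\cdot M-2\bigl(h^0(S,M)-1\bigr)$ is curve-independent, a uniform lifting statement would force constancy there too, contradicting these examples; so any repair must consume the extra hypothesis in an essential, quantitative way, and your proposal only gestures at where it would enter.

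The paper's passage from the general curve to all curves uses an entirely different mechanism, which your proposal omits: the Koszul-cohomology isomorphism $K_{g-c-1,1}(S,L\otimes\omega_S)\simeq K_{g-c-1,1}(C,\omega_C)$, valid for every $C\in\vert L\vert_s$ and established via Green's restriction Theorem \ref{complex} together with the Vanishing Theorem (\ref{vanishing}) --- and it is precisely in verifying the hypothesis $g-c-2\geq h^0(S,L\otimes\omega_S^{2})$ of the latter that the nef-and-big assumption on $L\otimes\omega_S$ and the condition on $\Cliff(\omega_S^\vee\otimes\oo_C)$ are used. Since the general curve satisfies Green's Conjecture (Proposition \ref{general}), this group vanishes for every curve; the Green--Lazarsfeld nonvanishing $K_{p,1}(C,\omega_C)\neq 0$ for $p\leq g-\Cliff(C)-2$ then forces $\Cliff(C)\geq c$ for all members, and semicontinuity yields equality, constancy of the gonality, and Clifford dimension $1$. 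Your treatment of the exceptional case (numerics forcing $r\leq 3$, then birational morphisms onto $\mathbb{P}^2$ or a normal cubic surface) does run parallel to the paper's Proposition \ref{cliffdim}, but there the lifted bundle comes out of the stability analysis, not out of a general lifting principle, so this part of your argument also inherits the gap.
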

This generalizes results of Pareschi (cf. \cite{delpezzo}) and Knutsen (cf. \cite{leopold}) concerning the Brill-Noether theory of curves lying on a Del Pezzo surface $S$. In \cite{leopold}, the author proved that line bundles violating the constancy of the Clifford index only exist when $K_S^2=1$; they are described  in terms of the coefficients of the generators of $\Pic(S)$ in their presentation. In fact, one can show that such line bundles are exactly those satisfying: $L\otimes\omega_S$ is nef and big and the restriction of the anticanonical bundle $\omega_S^\vee$ to a general curve in $\vert L\vert_s$ computes its Clifford index (cf. Remark \ref{finale}).

The proofs of Theorem \ref{veloce} and Theorem \ref{cli} rely on vector bundle techniques \`a la Lazarsfeld (cf. \cite{lazarsfeld}); in particular, we consider rank-$2$ bundles $E_{C,A}$, which are the analogue of the Lazarsfeld-Mukai bundles for $K3$ surfaces. The key fact proved in Section \ref{to} is that, if $A$ is a complete, base point free pencil on a general curve $C\in\vert L\vert_s$, the dimension of $\ker\mu_{0,A}$ is controlled by $H^2(S,E_{C,A}\otimes E_{C,A}^\vee)$; if this is nonzero, the bundle $E_{C,A}$ cannot be slope-stable with respect to any polarization $H$ on $S$. 

By considering Harder-Narasimhan and Jordan-H\"older filtrations, in Section \ref{ne} we perform a parameter count for pairs $(C,A)$ such that $E_{C,A}$ is not $\mu_H$-stable; this gives an upper bound for the dimension of any irreducible component $\W$ of $\W^1_d(\vert L\vert)$ which dominates $\vert L\vert$ under the natural projection $\pi:\W^1_d(\vert L\vert)\to\vert L\vert_s$. It turns out (cf. Proposition \ref{cliffdim}) that, if a general curve $C\in\vert L\vert_s$ is exceptional, the same holds true for all curves in $\vert L\vert_s$ and one is either in case (i) or (ii) of Theorem \ref{cli}; in this context we recall that Green's Conjecture for curves of Clifford dimension $2$ and $3$ was verified by Loose in \cite{loose}. If instead $C$ has Clifford dimension $1$, our parameter count ensures that it satisfies the linear growth condition (\ref{mmm}). In order to deduce Green's Conjecture for \emph{every} curve in $\vert L\vert_s$, we make use of the hypotheses made on $L$ and show that the Koszul group $K_{g-c-1,1}(C,\omega_C)$ does not depend (up to isomorphism) on the choice of $C\in\vert L\vert_s$, as soon as $c$ equals the Clifford index of a general curve in $\vert L\vert_s$. Semicontinuity will imply the constancy of the Clifford index and the gonality.\\\textbf{Acknowledgements:} I am grateful to my advisor Gavril Farkas, who suggested me to investigate the topic.

\section{Syzygies and Koszul Cohomology}\label{ge}
If $L$ is an ample line bundle on a complex projective variety $X$, let $S:=\mathrm{Sym}^\ast H^0(X,L)$ be the homogeneous coordinate ring of the projective space $\mathbb{P}(H^0(X,L)^\vee)$ and set $R(X):=\bigoplus_m H^0(X,L^{m})$. Being a finitely generated $S$-module, $R(X)$ admits a minimal graded free resolution
$$
0\to E_{s}\to\ldots\to E_1\to E_0\to R(X)\to 0,
$$
where for $k\geq 1$ one can write $E_k=\sum_{i\geq k}S(-i-1)^{\beta_{k,i}}$. The \emph{syzygies} of $X$ of order $k$ are by definition the graded components of the $S$-module $E_k$. We say that the pair $(X,L)$ satisfies property $(N_p)$ if $E_0=S$ and $E_k=S(-k-1)^{\beta_{k,k}}$ for all $1\leq k\leq p$. In other words, property $(N_0)$ is satisfied whenever $\phi_L$ embeds $X$ as a projectively normal variety, while property $(N_1)$ also requires that that the ideal of $X$ in $\mathbb{P}(H^0(X,L)^\vee)$ is generated by quadrics; for $p>1$, property $(N_p)$ means that the syzygies of $X$ up to order $p$ are linear. 

The most effective tool in order to compute syzygies is Koszul cohomology, whose definition is the following. Let $L\in\Pic(X)$ and $F$ be a coherent sheaf on $X$. The Koszul cohomology group $K_{p,q}(X,F,L)$ is defined as the cohomology at the middle-term of the complex 
$$
\bigwedge^{p+1} H^0(L)\otimes H^0(F\otimes L^{q-1})\to\bigwedge^{p} H^0(L)\otimes H^0(F\otimes L^{ q})\to\bigwedge^{p-1} H^0(L)\otimes H^0(F\otimes L^{q+1}).
$$
When $F\simeq \oo_X$, the Koszul cohomology group is conventionally denoted by $K_{p,q}(X,L)$. It turns out (cf. \cite{koszul}) that property $(N_p)$ for the pair $(X,L)$ is equivalent to the vanishing
$$
K_{i,q}(X,L)=0\qquad \textrm{  for all }i\leq p\textrm{ and }q\geq 2.
$$
In particular, Green's Conjecture can be rephrased by asserting that $(C,\omega_C)$ satisfies property $(N_p)$ whenever $p<\Cliff(C)$.

In the sequel we will make use of the following results, which are due to Green. The first one is the Vanishing Theorem (cf. \cite[Theorem (3.a.1)]{koszul}), stating that
\begin{equation}\label{vanishing}
K_{p,q}(X,E,L)=0\qquad \textrm{  if  }p\geq h^0(X,E\otimes L^{ q}).
\end{equation}
The second one (cf. \cite[Theorem (3.b.1)]{koszul}) relates the Koszul cohomology of $X$ to that of a smooth hypersurface $Y\subset X$ in the following way.
\begin{thm}\label{complex}
Let $X$ be a smooth irreducible projective variety and assume $L, N\in\Pic(X)$ satisfy 
\begin{eqnarray}
\label{sa}H^0(X,N\otimes L^\vee)&=&0\\ 
\label{sso}H^1(X,N^{ q}\otimes L^\vee)&=&0,\qquad \textrm{   }\forall\, q\geq 0.
\end{eqnarray}
 Then, for every smooth integral divisor $Y\in\vert L\vert$, there exists a long exact sequence 
$$
\cdots \to K_{p,q}(X,L^\vee,N)\to K_{p,q}(X,N)\to K_{p,q}(Y,N\otimes\oo_Y)\to K_{p-1,q+1}(X,L^\vee,N)\to\cdots.
$$
\end{thm}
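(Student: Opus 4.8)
The plan is to realize the three Koszul cohomology groups as the homologies of three complexes fitting into a short exact sequence, and then read off the long exact sequence. I would start from the structure sequence of the divisor $Y\in\vert L\vert$, namely $0\to L^\vee\to\oo_X\to\oo_Y\to 0$, and tensor it by $N^q$ to obtain
$$
0\to N^q\otimes L^\vee\to N^q\to N^q\otimes\oo_Y\to 0
$$
for every $q\geq 0$. Passing to the associated long exact sequence in cohomology and invoking hypothesis (\ref{sso}), the vanishing $H^1(X,N^q\otimes L^\vee)=0$ yields, for each $q\geq 0$, a short exact sequence of global sections
$$
0\to H^0(X,N^q\otimes L^\vee)\to H^0(X,N^q)\to H^0(Y,N^q\otimes\oo_Y)\to 0.
$$

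The key preliminary point is to align the exterior algebras entering the three Koszul complexes. Here I would use hypothesis (\ref{sa}) in the $q=1$ instance of the sequence above: since $H^0(X,N\otimes L^\vee)=0$, the restriction map $H^0(X,N)\to H^0(Y,N\otimes\oo_Y)$ is an isomorphism, whence $\bigwedge^p H^0(X,N)\cong\bigwedge^p H^0(Y,N\otimes\oo_Y)$ for all $p$. Tensoring the short exact sequences of sections by $\bigwedge^p H^0(X,N)$---which is exact, being a tensor product over a field---produces, in each bidegree $(p,q)$, a short exact sequence whose three terms are precisely those computing $K_{p,q}(X,L^\vee,N)$, $K_{p,q}(X,N)$ and $K_{p,q}(Y,N\otimes\oo_Y)$.

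Next I would check that these term-wise sequences assemble into a short exact sequence of complexes, where for fixed total degree $p+q$ the Koszul differential lowers $p$ by one and raises $q$ by one. This reduces to the naturality of the Koszul differential: the maps $L^\vee\to\oo_X$ and the restriction $\oo_X\to\oo_Y$ are morphisms of $N$-module sheaves, and multiplication by a section $v\in H^0(X,N)$, identified with $v\vert_Y\in H^0(Y,N\otimes\oo_Y)$, commutes with restriction, so the vertical maps commute with the differentials. The long exact homology sequence of a short exact sequence of complexes then applies, the connecting homomorphism sending the homology of the quotient (the Koszul cohomology of $Y$) into that of the sub-complex (the groups for the pair $(X,L^\vee,N)$) with the bidegree passing from $(p,q)$ to $(p-1,q+1)$; this is exactly the asserted sequence.

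The main obstacle is the bookkeeping behind the exterior-algebra identification: the Koszul complex on $Y$ is built from $\bigwedge^p H^0(Y,N\otimes\oo_Y)$ rather than from $\bigwedge^p H^0(X,N)$, and it is precisely hypotheses (\ref{sa}) and (\ref{sso}) that render this harmless by forcing the two vector spaces to coincide. Once this identification is secured, the remaining verifications---exactness of the term-wise sequences, compatibility with the differentials, and the bidegree shift of the connecting map---are formal, and the statement follows by applying the snake lemma degree by degree.
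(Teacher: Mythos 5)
Your proposal is correct, and it is essentially the canonical argument: the hypotheses (\ref{sa}) and (\ref{sso}) give the isomorphism $H^0(X,N)\simeq H^0(Y,N\otimes\oo_Y)$ and the term-wise short exact sequences $0\to \bigwedge^p H^0(N)\otimes H^0(N^q\otimes L^\vee)\to \bigwedge^p H^0(N)\otimes H^0(N^q)\to \bigwedge^p H^0(N)\otimes H^0(N^q\otimes\oo_Y)\to 0$, which assemble into a short exact sequence of Koszul complexes whose long exact homology sequence is the asserted one, with the connecting map shifting $(p,q)$ to $(p-1,q+1)$. Note that the paper itself offers no proof of this statement---it is quoted directly from Green's paper as Theorem (3.b.1)---so your argument simply reconstructs Green's original proof, and it does so without gaps.
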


\section{Petri map via vector bundles}\label{to}
Let $S$ be a smooth rational surface with an anticanonical pencil and $C\subset S$ be a smooth, irreducible curve of genus $g$. We set $L:=\oo_S(C)$. If $A$ is a complete, base point free $g^r_d$ on $C$, as in the case of $K3$ surfaces, let $F_{C,A}$ be the vector bundle on $S$ defined by the sequence
$$
0\to F_{C,A}\to H^0(C,A)\otimes \oo_S\stackrel{ev_{A,S}}{\longrightarrow} A\to 0,$$
and set $E_{C,A}:=F_{C,A}^\vee$. Since $N_{C\vert S}=\oo_C(C)$, by dualizing the above sequence we get
\begin{equation}\label{bonn}
0\to H^0(C,A)^\vee\otimes \oo_S \to E_{C,A}\to \oo_C(C)\otimes A^\vee\to 0.
\end{equation}
This trivially implies that:
\begin{itemize}
\item $\chi(S,E_{C,A}\otimes \omega_S)=h^0(S,E_{C,A}\otimes \omega_S)=g-d+r$, 
\item $\rk\, E_{C,A}=r+1$, $c_1(E_{C,A})=L$, $c_2(E)=d$,
\item $h^2(S,E_{C,A})=0$, $\chi(S,E_{C,A})=g-d+r-c_1(L)\cdot K_S$.
\end{itemize}
Being a bundle of type $E_{C,A}$ is an open condition. Indeed, a vector bundle $E$ of rank $r+1$ is of type $E_{C,A}$ whenever $h^1(S,E\otimes\omega_S) = h^2(S,E\otimes\omega_S) = 0$ and there exists $\Lambda\in G(r + 1, H^0(S, E))$ such that the degeneracy locus of the evaluation map $ev_\Lambda : \Lambda\otimes\oo_S\to E$ is a smooth connected curve.

Notice that the dimension of the space of global sections of $E_{C,A}$ depends not only on the type of the linear series $A$ but also on $A\otimes \omega_S$. In particular, one has 
\begin{eqnarray*}
h^0(S,E_{C,A})&=&r+1+h^0(C, \oo_C(C)\otimes A^\vee),\\
h^1(S,E_{C,A})&=&h^0(C,A\otimes\omega_S).
\end{eqnarray*}
Moreover, if the line bundle $\oo_C(C)\otimes A^\vee$ has sections, then $E_{C,A}$ is generated off its base points.
In the case $r=1$, we prove the following.
\begin{lem}\label{impo}
Let $A$ be a complete, base point free $g^1_d$ on $C\subset S$. If either
\begin{itemize}
\item $h^0(S,\omega_S^\vee)>2$, or
\item $h^0(S,\omega_S^\vee)=2$ and $A\not\simeq \omega_{S}^\vee\otimes\oo_C$
\end{itemize}
holds, then $h^0(C,A\otimes\omega_S)=0$.
\end{lem}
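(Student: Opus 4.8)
The statement to prove is the vanishing $h^0(C,A\otimes\omega_S)=0$, where $A\otimes\omega_S$ abbreviates $A\otimes(\omega_S\otimes\oo_C)$; by the computation recorded just before the lemma this is the same as $h^1(S,E_{C,A})=0$. The plan is to argue by contradiction: assume there is a nonzero section $t\in H^0(C,A\otimes\omega_S)$ and play the base point free pencil $A$ off against the sections of $\omega_S^\vee$ supplied by the anticanonical pencil.

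The heart of the argument is the following. Since $(A\otimes\omega_S)\otimes(\omega_S^\vee\otimes\oo_C)\simeq A$, multiplication by $t$ gives a map $\cdot t\colon H^0(C,\omega_S^\vee\otimes\oo_C)\to H^0(C,A)$, and because $C$ is integral and $t\neq 0$ this map is injective; hence $h^0(C,\omega_S^\vee\otimes\oo_C)\leq h^0(C,A)=2$. I would then sharpen this using that $A$ is base point free. Writing $E:=\mathrm{div}(t)$, every section in the image of $\cdot t$ vanishes along $E$, so the image is contained in $H^0(C,A\otimes\oo_C(-E))$. If $\deg E=\deg(A\otimes\omega_S)>0$, base point freeness forces $h^0(C,A\otimes\oo_C(-E))\leq 1$, whence $h^0(C,\omega_S^\vee\otimes\oo_C)\leq 1$; if instead $\deg E=0$, then $A\otimes\omega_S\simeq\oo_C$, i.e. $A\simeq\omega_S^\vee\otimes\oo_C$.

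To produce the contradiction I would feed in the anticanonical pencil. From the sequence $0\to\omega_S^\vee\otimes L^\vee\to\omega_S^\vee\to\omega_S^\vee\otimes\oo_C\to 0$ the restriction map $H^0(S,\omega_S^\vee)\to H^0(C,\omega_S^\vee\otimes\oo_C)$ has kernel $H^0(S,\omega_S^\vee\otimes L^\vee)$, so
\[
h^0(C,\omega_S^\vee\otimes\oo_C)\geq h^0(S,\omega_S^\vee)-h^0(S,\omega_S^\vee\otimes L^\vee).
\]
Assume first that this restriction is injective, i.e. $h^0(S,\omega_S^\vee\otimes L^\vee)=0$. If $h^0(S,\omega_S^\vee)>2$, the lower bound gives $h^0(C,\omega_S^\vee\otimes\oo_C)\geq 3$, contradicting the upper bound $2$ from the previous step. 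If $h^0(S,\omega_S^\vee)=2$, then $h^0(C,\omega_S^\vee\otimes\oo_C)\geq 2$; comparing with the sharpened bound forces $\deg E=0$, that is $A\simeq\omega_S^\vee\otimes\oo_C$, which is exactly the alternative excluded by hypothesis. Thus in both cases one reaches a contradiction, and $h^0(C,A\otimes\omega_S)=0$.

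The step I expect to be the real obstacle is the injectivity assumption just used, equivalently the control of $h^0(S,\omega_S^\vee\otimes L^\vee)=h^0(S,-K_S-C)$. The restriction of the anticanonical pencil to $C$ can lose dimension precisely when $-K_S-C$ is effective, i.e. when $C$ is (a component of) an anticanonical divisor; the borderline case is a genus-one curve $C\in|\omega_S^\vee|$ together with $A\simeq\omega_S^\vee\otimes\oo_C$, for which the conclusion genuinely fails. Establishing that this degeneration does not occur — and recognising that, when $h^0(S,\omega_S^\vee)=2$, the only surviving possibility is exactly the excluded isomorphism $A\simeq\omega_S^\vee\otimes\oo_C$ — is where the two numerical alternatives in the statement are needed, and is the part requiring the most care.
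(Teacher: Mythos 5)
Your executed argument is, in substance, the paper's own proof: a nonzero section $t\in H^0(C,A\otimes\omega_S)$ gives, by multiplication, an injection of $H^0(C,\omega_S^\vee\otimes\oo_C)$ into the two-dimensional space $H^0(C,A)$; base point freeness of $A$ then either forces $h^0(C,\omega_S^\vee\otimes\oo_C)\leq 1$ or identifies $A\simeq\omega_S^\vee\otimes\oo_C$ (the paper phrases this as: $\mathrm{div}(t)$ would be a fixed part of $\vert A\vert$); and the restriction sequence from $S$ supplies the competing lower bound. All of that is correct, and the dichotomy in the hypotheses is used exactly where you used it, in comparing the two bounds.

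The genuine gap is the step you explicitly leave open, the vanishing $h^0(S,\omega_S^\vee\otimes L^\vee)=0$, and your diagnosis of what closes it is wrong: it is \emph{not} the two numerical alternatives of the statement that are needed there -- you have already spent those -- but a positivity observation that the paper makes in its first line, namely that $L\otimes\omega_S$ is effective (and nontrivial). Concretely, by adjunction one has $0\to\omega_S\to L\otimes\omega_S\to\omega_C\to 0$, and rationality of $S$ gives $h^0(S,\omega_S)=h^1(S,\omega_S)=0$, whence $h^0(S,L\otimes\omega_S)=h^0(C,\omega_C)=g$; so as soon as $g\geq 2$ the class $L\otimes\omega_S$ is a nonzero effective class, and a nonzero effective class and its negative cannot both be effective, giving $h^0(S,\omega_S^\vee\otimes L^\vee)=0$. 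Your suspicion that something genuinely fails near $C\in\vert\omega_S^\vee\vert$ is sound: on a rational elliptic surface a smooth anticanonical fiber $C$ has genus $1$ and $\oo_C(C)\simeq\oo_C$, so any degree-two pencil $A$ on $C$ satisfies both stated hypotheses ($h^0(S,\omega_S^\vee)=2$ and $A\not\simeq\omega_S^\vee\otimes\oo_C$) while $h^0(C,A\otimes\omega_S)=2\neq 0$. But this is precisely the degenerate case $L\simeq\omega_S^\vee$, i.e. $L\otimes\omega_S\simeq\oo_S$, ruled out once the genus is at least $2$ (as it is throughout the paper); it is not controlled by the dichotomy on $h^0(S,\omega_S^\vee)$ and $A$, and no amount of care with that dichotomy will produce the missing injectivity of $H^0(S,\omega_S^\vee)\to H^0(C,\omega_S^\vee\otimes\oo_C)$.
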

\begin{proof}
Since $L\otimes \omega_S$ is effective, the short exact sequence
$$
0\to L^\vee\otimes\omega_S^\vee\to \omega_S^\vee\to \omega_{S}^\vee\otimes \oo_C\to 0$$
implies $h^0(C,\omega_{S}^\vee\otimes \oo_C)\geq h^0(S,\omega_S^\vee)$ and the statement follows trivially if $ h^0(S,\omega_S^\vee)>2$. Let $ h^0(S,\omega_S^\vee)=2$ and $h^0(C,A\otimes\omega_S)>0$. Then necessarily $h^0(C,\omega_{S}^\vee\otimes \oo_C)=2$ and $A\otimes\omega_S$ is the fixed part of the linear system of sections of $A$. Since $A$ is base point free by hypothesis, then $A\simeq\omega_{S}^\vee\otimes \oo_C$.
\end{proof}
Under the hypotheses of the above Lemma, the bundle $E_{C,A}$ is globally generated off a finite set and $\chi(S,E_{C,A})=h^0(S,E_{C,A})=g-d+1-c_1(L)\cdot K_S$. We remark that the vanishing of $h^1(S,E_{C,A})$ turns out to be crucial in most of the following arguments and this is why the assumptions on the anticanonical linear system of $S$ are needed.

The following proposition gives a necessary and sufficient condition for the injectivity of the Petri map $\mu_{0,A}:H^0(C,A)\otimes H^0(C,\omega_C\otimes A^\vee)\to H^0(C,\omega_C)$. 
\begin{prop}\label{seppia}
If $C\in\vert L\vert_s$ is general and either $h^0(S,\omega_S^\vee)>2$ or $h^0(S,\omega_S^\vee)=2$ and $A\not\simeq \omega_{S}^\vee\otimes\oo_C$, then for any complete, base point free pencil $A$ on $C$ one has:
$$
\ker\mu_{0,A}\simeq H^2(S,E_{C,A}\otimes E_{C,A}^\vee).
$$
In particular, the vanishing of the one side implies the vanishing of the other.
\end{prop}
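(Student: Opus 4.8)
The plan is to compute the two sides separately and then match them on the curve $C$. Throughout write $E:=E_{C,A}$ and $V:=H^0(C,A)$, so that $\dim V=2$ and $F_{C,A}=E^\vee$. For the left-hand side I would first invoke the base-point-free pencil trick. Since $A$ is a complete base point free pencil, the kernel bundle is $M_A\cong A^\vee$, fitting into $0\to A^\vee\to V\otimes\oo_C\to A\to 0$. Tensoring this by $\omega_C\otimes A^\vee$ and taking global sections, the middle map is exactly $\mu_{0,A}$, so that
$$\ker\mu_{0,A}\cong H^0(C,\omega_C\otimes A^{-2}).$$

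For the right-hand side I would use Serre duality on the surface, $H^2(S,E\otimes E^\vee)\cong\Hom_S(E,E\otimes\omega_S)^\vee$, and compute $\Hom_S(E,E\otimes\omega_S)=H^0(S,E^\vee\otimes E\otimes\omega_S)$ by applying $\Hom_S(E,-)$ to the twist of (\ref{bonn}) by $\omega_S$, namely $0\to V^\vee\otimes\omega_S\to E\otimes\omega_S\to \omega_C\otimes A^\vee\to 0$ (using $\oo_C(C)\otimes\omega_S|_C=\omega_C$). Here the boundary terms vanish: $\Hom_S(E,V^\vee\otimes\omega_S)=V^\vee\otimes H^0(S,F_{C,A}\otimes\omega_S)\hookrightarrow V^\vee\otimes V\otimes H^0(S,\omega_S)=0$ because $p_g(S)=0$; and $\Ext^1_S(E,V^\vee\otimes\omega_S)=V^\vee\otimes H^1(S,F_{C,A}\otimes\omega_S)$, where $H^1(S,F_{C,A}\otimes\omega_S)\cong H^0(C,A\otimes\omega_S)=0$ by Lemma \ref{impo} (again using $p_g=q=0$). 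This yields the clean identification $\Hom_S(E,E\otimes\omega_S)\cong \Hom_C(E|_C,\omega_C\otimes A^\vee)=H^0(C,E^\vee|_C\otimes\omega_C\otimes A^\vee)$, so that the whole problem is transported onto $C$.

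It then remains to match this last curve-cohomology group with $\ker\mu_{0,A}$. Restricting the defining sequence of $F_{C,A}=E^\vee$ to $C$, and accounting for the $\mathrm{Tor}$ contribution $A\otimes\oo_C(-C)$, gives $0\to A\otimes\oo_C(-C)\to E^\vee|_C\to A^\vee\to 0$; tensoring by $\omega_C\otimes A^\vee$ and using $\oo_C(-C)\otimes\omega_C=\omega_S|_C$ produces
$$0\to \omega_S|_C\to E^\vee|_C\otimes\omega_C\otimes A^\vee\to \omega_C\otimes A^{-2}\to 0.$$
Comparing with the pencil-trick computation, I would conclude $H^0(C,E^\vee|_C\otimes\omega_C\otimes A^\vee)\cong H^0(C,\omega_C\otimes A^{-2})\cong\ker\mu_{0,A}$, and hence, dualizing, $H^2(S,E\otimes E^\vee)\cong\ker\mu_{0,A}$ as finite-dimensional spaces, so that one vanishes precisely when the other does.

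The main obstacle is exactly the last matching: I must show that the auxiliary line bundle $\omega_S|_C$ contributes nothing, i.e.\ that $H^0(C,\omega_S|_C)=0$ and that the connecting homomorphism $H^0(C,\omega_C\otimes A^{-2})\to H^1(C,\omega_S|_C)$ vanishes. This is where the genericity of $C\in\vert L\vert_s$ and the effectivity of $-K_S$ (the anticanonical pencil) enter decisively: for a movable, hence non-fixed-component, curve $C$ one has $\deg(\omega_S|_C)=K_S\cdot C=-(-K_S)\cdot C\le 0$, which forces $\omega_S|_C$ to be non-effective (thus $H^0(C,\omega_S|_C)=0$) as soon as the degree is strictly negative, the borderline degree-zero case being excluded for general $C$; the vanishing of the boundary map, equivalent to surjectivity of $H^0(C,E^\vee|_C\otimes\omega_C\otimes A^\vee)\to H^0(C,\omega_C\otimes A^{-2})$, I expect to obtain by lifting sections through the global structure of $E$ on $S$ together with the same generality assumption. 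Everything else in the argument is formal homological algebra driven by the vanishings $H^2(S,E)=H^1(S,E)=0$ furnished by Lemma \ref{impo}.
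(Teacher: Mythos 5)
Your formal reductions coincide exactly with the paper's own: the base-point-free pencil trick giving $\ker\mu_{0,A}\simeq H^0(C,\omega_C\otimes A^{-2})$, the Serre duality identification $H^2(S,E\otimes E^\vee)^\vee\simeq\Hom(E,E\otimes\omega_S)$, the vanishings $h^0(S,F_{C,A}\otimes\omega_S)=h^1(S,F_{C,A}\otimes\omega_S)=0$ (i.e.\ $h^2(S,E_{C,A})=h^1(S,E_{C,A})=0$, the latter from Lemma \ref{impo}), and your restriction sequence is precisely the paper's sequence (\ref{prr}). The genuine gap is the step you yourself flag and defer: the vanishing of the connecting map $\delta\colon H^0(C,\omega_C\otimes A^{-2})\to H^1(C,\omega_S\otimes\oo_C)$. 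This is not a loose end that can be patched by ``lifting sections through the global structure of $E$ on $S$''; it is the entire non-formal content of the proposition, and your earlier isomorphisms are of no help here: they show that every section of $F_{C,A}\otimes\omega_C\otimes A^\vee$ comes from $S$, but say nothing about which sections of $\omega_C\otimes A^{-2}$ lift to $F_{C,A}\otimes\omega_C\otimes A^\vee$, which is exactly the surjectivity at stake. The paper proves it via a mechanism absent from your sketch: following Pareschi, $\delta$ is identified, up to a nonzero scalar, with the composite $\rho^\vee\circ\mu_{1,A}$ of the Gaussian map $\mu_{1,A}\colon\ker\mu_{0,A}\to H^0(C,\omega_C^{2})$ with the dual Kodaira--Spencer map $\rho^\vee\colon H^0(C,\omega_C^{2})\to H^1(C,\omega_S\otimes\oo_C)=(T_C\vert L\vert)^\vee$, by mapping (\ref{prr}) to the $\omega_C$-twisted conormal sequence of $C\subset S$; then first-order deformation theory gives $\im(d\pi_{(C,A)})\subset\mathrm{Ann}(\im(\rho^\vee\circ\mu_{1,A}))$ for the projection $\pi\colon\W^1_d(\vert L\vert)\to\vert L\vert_s$, and Sard's lemma makes $d\pi_{(C,A)}$ surjective at a general point of any dominating component, forcing $\rho^\vee\circ\mu_{1,A}=0$, i.e.\ $\delta=0$. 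Generality of $C$ enters only through this deformation-theoretic argument (how $A$ moves with $C$), not through any section-lifting; for special pairs $(C,A)$ the surjectivity genuinely fails, which is why the hypothesis is there at all.

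Your treatment of the other term, $H^0(C,\omega_S\otimes\oo_C)$, is also incorrect as stated. From $0\to\omega_S\otimes L^\vee\to\omega_S\to\omega_S\otimes\oo_C\to 0$ and $h^0(S,\omega_S)=h^1(S,\omega_S)=0$ one gets $h^0(C,\omega_S\otimes\oo_C)=h^1(S,\omega_S\otimes L^\vee)=h^1(S,L)$, a number that is the same for every smooth $C\in\vert L\vert$; so generality of $C$ can never exclude the borderline case. Moreover, when $K_S\cdot C=0$ (which your degree bound allows), an irreducible $C$ not contained in a member of $\vert\omega_S^\vee\vert$ is disjoint from every such member, hence $\omega_S\otimes\oo_C\simeq\oo_C$ and $h^0=1$: the term genuinely survives. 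To obtain the isomorphism as stated one must either prove $H^1(S,L)=0$ (your degree argument does settle this when $K_S\cdot c_1(L)<0$), or settle for the surjection $H^2(S,E\otimes E^\vee)^\vee\twoheadrightarrow\ker\mu_{0,A}$, which is in fact all that the paper's subsequent applications (Lemma \ref{ll} and Corollary \ref{espresso}) require.
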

\begin{proof}
The proof proceeds as in \cite{pareschi}, hence we will not enter into details. As $A$ is a pencil, the kernel of the evaluation map $ev_{A,C}:H^0(C,A)\otimes \oo_C\twoheadrightarrow A$ is ismomorphic to $A^\vee$ and $\ker\mu_{0,A}\simeq H^0(C,\omega_C\otimes A^{-2})$. Since $\det F_{C,A}=L^\vee$, by adjunction one finds the following short exact sequence:
\begin{equation}\label{prr}0\to \omega_{S}\otimes\oo_C\to F_{C,A}\otimes \omega_C\otimes A^\vee\to \omega_C\otimes A^{-2}\to 0.\end{equation}
The coboundary map $\delta: H^0(C, \omega_C\otimes A^{-2})\to H^1(C,\omega_{S}\otimes\oo_C)$ coincides, up to multiplication by a nonzero scalar factor, with the composition of the Gaussian map
$$\mu_{1,A}:\ker\mu_{0,A}\to H^0(C,\omega_C^{ 2})$$
and the dual of the Kodaira spencer map
$$
\rho^\vee:  H^0(C,\omega_C^{ 2})\to (T_C\vert L\vert)^\vee=H^0(C,N_{C\vert S})^\vee=H^1(C,\omega_S\otimes\oo_C).$$
Indeed, as in \cite[Lemma 1]{pareschi}, one finds a commutative diagram
$$
\xymatrix{
0\ar[r]&\omega_{S}\otimes\oo_C\ar[r]\ar@{=}[d]&F_{C,A}\otimes \omega_C\otimes A^\vee\ar[r]\ar[d]&\omega_C\otimes A^{-2}\ar[r]\ar[d]^s&0\\
0\ar[r]&\omega_{S}\otimes\oo_C\ar[r]&\Omega^1_S\otimes \omega_C\ar[r]&\omega_C^{ 2}\ar[r]&0,\\
}
$$
where the homomorphism induced by $s$ on global sections is $\mu_{1,A}$ and the coboundary map $H^0(C,\omega_C^{ 2})\to H^1(C,\omega_{S}\otimes\oo_C)$ equals (up to a scalar factor) $\rho^\vee$. 

If $A$ has degree $d$, look at the natural projection $\pi:\W^1_d(\vert L\vert)\to\vert L\vert_s$. First order deformation arguments (see, for instance, \cite[p. 722]{arbarello2}) imply that 
$$\im(d\pi_{(C,A)})\subset \mathrm{Ann}(\im(\rho^\vee\circ \mu_{1,A})).$$
Therefore, by Sard's Lemma, if $C\in\vert L\vert_s$ is general, the short exact sequence (\ref{prr}) is exact on the global sections for any base point free $A\in W^1_d(C)\setminus W^2_d(C)$, and $\ker\mu_{0,A}\simeq H^0(C,F_{C,A}\otimes \omega_C\otimes A^\vee)$. By tensoring short exact sequence (\ref{bonn}) with $F_{C,A}\otimes \omega_S$, one finds that $$H^0(C,F_{C,A}\otimes \omega_C\otimes A^\vee)\simeq H^0(S,E_{C,A}^\vee\otimes E_{C,A}\otimes \omega_S)$$ because $H^i(S,F_{C,A}\otimes \omega_S)\simeq H^{2-i}(S,E_{C,A})^\vee=0$ for $i=0,1$. The statement follows now by Serre duality.
\end{proof}
\begin{cor}\label{espresso}
Let $H$ be any polarization on $S$ and $\W$ be an irreducible component of $\W^1_d(\vert L\vert)$ which dominates $\vert L\vert$ and whose general points correspond to $\mu_{H}$-stable bundles; in the special case where $h^0(S,\omega_S^\vee)=2$, assume that general points of $\W$ are not of the form $(C, \omega_{S}^\vee\otimes \oo_C)$.

Then, $\rho(g,1,d)\geq 0$ and $\W$ is reduced of dimension equal to $\dim\vert L\vert+\rho(g,1,d)$.
\end{cor}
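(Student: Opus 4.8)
The plan is to combine a standard determinantal lower bound for the dimension of every component of $\W^1_d(\vert L\vert)$ with an upper bound coming from Proposition \ref{seppia}, the latter being made effective precisely by the stability hypothesis. First I would record the lower bound: viewing $\W^1_d(\vert L\vert)$ as a relative Brill--Noether locus over $\vert L\vert_s$, cut out inside the relative Picard variety (of dimension $\dim\vert L\vert+g$) by the determinantal condition $h^0\geq 2$ of expected codimension $2(g-d+1)$, every irreducible component has dimension at least $\dim\vert L\vert+\rho(g,1,d)$ (cf. \cite{arbarello2}); moreover, along a component of this expected dimension the scheme is Cohen--Macaulay.

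The core of the argument is to prove that the Petri map is injective at a general point $(C,A)$ of $\W$. Since $\W$ dominates $\vert L\vert$ and its general points correspond to $\mu_H$-stable bundles $E_{C,A}$, a general $(C,A)\in\W$ has $C$ general in $\vert L\vert_s$, $A$ a complete base point free pencil with $h^0(C,A)=2$ (so that $E_{C,A}$ has rank $r+1=2$), and $E_{C,A}$ $\mu_H$-stable; in the case $h^0(S,\omega_S^\vee)=2$ the extra hypothesis guarantees $A\not\simeq\omega_S^\vee\otimes\oo_C$, so that Proposition \ref{seppia} applies and yields $\ker\mu_{0,A}\simeq H^2(S,E_{C,A}\otimes E_{C,A}^\vee)$. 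I would then rewrite the right-hand side via Serre duality as $\Hom(E_{C,A},E_{C,A}\otimes\omega_S)^\vee$. Because $S$ carries an anticanonical pencil, $\omega_S^\vee$ is a nonzero effective divisor, hence $K_S\cdot H<0$ and $\mu_H(E_{C,A}\otimes\omega_S)=\mu_H(E_{C,A})+K_S\cdot H<\mu_H(E_{C,A})$. A nonzero homomorphism from the $\mu_H$-stable bundle $E_{C,A}$ to the $\mu_H$-stable bundle $E_{C,A}\otimes\omega_S$ of strictly smaller slope is impossible: its image would be simultaneously a quotient of the source, of slope $\geq\mu_H(E_{C,A})$, and a subsheaf of the target, of slope $\leq\mu_H(E_{C,A}\otimes\omega_S)$. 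Therefore $H^2(S,E_{C,A}\otimes E_{C,A}^\vee)=0$ and $\mu_{0,A}$ is injective.

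It then remains to convert this pointwise injectivity into the scheme-theoretic statement. At $(C,A)$ the differential of $\pi$ fits into $0\to\ker d\pi_{(C,A)}\to T_{(C,A)}\W^1_d(\vert L\vert)\to T_C\vert L\vert_s$, where $\ker d\pi_{(C,A)}=T_{[A]}W^1_d(C)=(\im\mu_{0,A})^\perp$ has dimension $\rho(g,1,d)+\dim\ker\mu_{0,A}=\rho(g,1,d)$ by the standard Brill--Noether tangent space computation and the injectivity just established. Since $\dim T_C\vert L\vert_s=\dim\vert L\vert$, this gives $\dim T_{(C,A)}\W^1_d(\vert L\vert)\leq\dim\vert L\vert+\rho(g,1,d)$. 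Comparing with the lower bound forces equality, so $(C,A)$ is a smooth point of $\W^1_d(\vert L\vert)$ of local dimension $\dim\vert L\vert+\rho(g,1,d)$; in particular $\rho(g,1,d)\geq 0$ and $\W$, the unique component through this general point, has the asserted dimension. Reducedness follows because $\W$ has everywhere the expected codimension, hence is Cohen--Macaulay, and is generically reduced by the smoothness at $(C,A)$.

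The hard part, in my view, is the last paragraph: organizing the relative deformation theory so that the tangent space sequence and the identification $\ker d\pi_{(C,A)}=T_{[A]}W^1_d(C)$ are rigorous, and then upgrading generic reducedness to reducedness everywhere via the Cohen--Macaulayness of a determinantal locus attaining its expected codimension. By contrast, the vanishing $H^2(S,E_{C,A}\otimes E_{C,A}^\vee)=0$ is immediate once the slope inequality $K_S\cdot H<0$ is available, which is exactly the point where the anticanonical pencil hypothesis on $S$ enters.
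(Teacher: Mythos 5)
Your proposal is correct and takes essentially the same route as the paper: at a general point $(C,A)$ of $\W$, Proposition \ref{seppia} gives $\ker\mu_{0,A}\simeq H^2(S,E_{C,A}\otimes E_{C,A}^\vee)\simeq\Hom(E_{C,A},E_{C,A}\otimes\omega_S)^\vee$, and this vanishes because $E_{C,A}$ and $E_{C,A}\otimes\omega_S$ are both $\mu_H$-stable with $\mu_H(E_{C,A}\otimes\omega_S)<\mu_H(E_{C,A})$ (as $K_S\cdot H<0$). The extra material you supply (the determinantal lower bound, the tangent-space computation, and reducedness via Cohen--Macaulayness) is precisely the standard bookkeeping the paper compresses into the sentence ``$\W$ is smooth in $(C,A)$ of the expected dimension.''
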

\begin{proof}
Let $(C,A)$ be a general point of $\W$. If $E_{C,A}$ is stable, $E_{C,A}\otimes\omega_S$ also is. The inequality $\mu_{H}(E_{C,A})>\mu_{H}(E_{C,A}\otimes \omega_S)$ implies that $H^2(S,E_{C,A}^\vee\otimes E_{C,A})\simeq\Hom(E_{C,A},E_{C,A}\otimes\omega_S)^\vee=0$. As a consequence, $\W$ is smooth in $(C,A)$ of the expected dimension.
\end{proof}
\begin{rem}\label{gusto}
Corollary \ref{espresso} can be alternatively proved by arguing in the following way.\\ Let $M:=M^{\mu s}_{H}(c)$ be the moduli space of $\mu_{H}$-stable vector bundles on $S$ of total Chern class $c=2+c_1(L)+d\omega\in H^{2 *}(S,\mathbb{Z})$, where $\omega$ is the fundamental cocycle. Since every $[E]\in M$ satisfies $\Ext^2(E,E)_0=0$, it turns out that $M$ is a smooth, irreducible projective variety of dimension $4d-c_1(L)^2-3$ (cf. \cite[Remark 2.3]{costa}), as soon as it is non-empty. Let $M^0$ be the open subset of $M$ parametrizing vector bundles $[E]$ of type $E_{C,A}$; if this is nonempty, define $\G$ as the Grassmann bundle on $M^0$ with fiber over $[E]$ equal to $G(2,H^0(S,E))$. Look at the rational map $h:\G\dashrightarrow \W^1_d(\vert L\vert)$ sending a general $(E,\Lambda)\in\G$ to the pair $(C_\Lambda,A_\Lambda)$, where $C_\Lambda$ is the degeneracy locus of the evaluation map $ev_\Lambda:\Lambda\otimes\oo_S\to E$ and $\oo_{C_\Lambda}(C_\Lambda)\otimes A^\vee$ is its cokernel. Since any $[E]\in M^0$ is simple, one easily checks that $h$ is birational onto its image, that is denoted by $\W$. As a consequence, the dimension of $\W$ equals: $$4d-c_1(L)^2-3+2(g-d-1-c_1(L)\cdot K_S)=2d-3-c_(L)\cdot K_S\leq \dim\vert L\vert +\rho(g,1,d).$$
\end{rem}

\section{Parameter count}\label{ne}
By the analysis performed in the previous section, given a polarization $H$ on $S$, the linear growth condition for a general curve in $\vert L\vert_s$ can be verified by controlling the dimension of every dominating component $\W\subset \W^1_d(\vert L\vert)$, whose general points are pairs $(C,A)$ such that $A\not\simeq\omega_{S}^\vee\otimes \oo_C$ and the bundle $E_{C,A}$ is not $\mu_{H}$-stable. Indeed, if $A\simeq\omega_{S}^\vee\otimes \oo_C$ for a general point of $\W$, then $\omega_{S}^\vee\otimes \oo_{C'}$ is an isolated point of $W^1_d(C')$ for every $C'\in\vert L\vert_s$.

Let $A$ be a complete, base point free $g^1_d$ on a curve $C\in\vert L\vert_s$ such that the bundle $E:=E_{C,A}$ is not $\mu_{H}$-stable and $A\not\simeq\omega_{S}^\vee\otimes \oo_C$ if $h^0(S,\omega_S^\vee)=2$. The maximal destabilizing sequence of $E$ has the form
\begin{equation}\label{uova}
0\to M\to E\to N\otimes I_\xi\to 0,
\end{equation}
where $M, N\in\Pic(S)$ satisfy 
\begin{equation}\label{slope}
\mu_{H}(M)\geq \mu_H(E)\geq \mu_{H}(N),
\end{equation}
with both or none of the  inequalities being strict, and $I_\xi$ is the ideal sheaf of a $0$-dimensional subscheme $\xi\subset S$ of length $l=d-c_1(N)\cdot c_1(M)$.
\begin{lem}\label{ll}
In the above situation, assume that general curves in $\vert L\vert_s$ have Clifford index $c$. If $\mu_{0,A}$ is non-injective and $C$ is general in $\vert L\vert$, then the following inequality holds:
\begin{equation}\label{yuppi}
c_1(M)\cdot c_1(N)+c_1(N)\cdot K_S\geq c.
\end{equation}
\end{lem}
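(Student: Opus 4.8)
The plan is to produce, out of the destabilizing line bundle $N$, a line bundle on $C$ that contributes to the Clifford index and whose Clifford index is bounded above by the left-hand side of (\ref{yuppi}); since $C$ is general we have $\Cliff(C)=c$, so this yields the inequality. The natural candidate is the restriction $N\otimes\oo_C$. First I would record the numerics: writing $\mu_H(M)=c_1(M)\cdot H$ and $\mu_H(E)=\tfrac12 c_1(L)\cdot H$, the destabilizing condition (\ref{slope}) gives $c_1(M)\cdot H\geq c_1(N)\cdot H$ and, since $C$ is effective, $c_1(M)\cdot H>0$; in particular $h^0(S,M^\vee)=0$. A Riemann--Roch computation on $S$ (using $\chi(\oo_S)=1$) together with the restriction sequence $0\to M^\vee\to N\to N\otimes\oo_C\to 0$ then shows that $\Cliff(N\otimes\oo_C)\leq c_1(M)\cdot c_1(N)+c_1(N)\cdot K_S$ as soon as $h^0(C,N\otimes\oo_C)\geq\chi(S,N)$; this in turn follows from $h^0(C,N\otimes\oo_C)\geq h^0(S,N)\geq\chi(S,N)$ provided $h^2(S,N)=h^0(S,\omega_S\otimes N^\vee)=0$.

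The role of the non-injectivity of $\mu_{0,A}$ is to force an effectivity statement on the destabilizing data. By Proposition \ref{seppia}, for $C$ general the hypothesis gives $H^2(S,E\otimes E^\vee)\neq 0$, hence by Serre duality a nonzero homomorphism $\phi\colon E\to E\otimes\omega_S$. I would analyze $\phi$ through the maximal destabilizing sequence (\ref{uova}). Composing $\phi|_M$ with the projection $E\otimes\omega_S\to(N\otimes\omega_S)\otimes I_\xi\hookrightarrow N\otimes\omega_S$ yields a map $M\to N\otimes\omega_S$ whose nonvanishing would require $(c_1(N)-c_1(M)+K_S)\cdot H\geq 0$; this is impossible, since $K_S\cdot H<0$ (as $-K_S$ is effective and $H$ ample) and $c_1(N)\cdot H\leq c_1(M)\cdot H$. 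Hence $\phi(M)\subseteq M\otimes\omega_S$, and as $\Hom(M,M\otimes\omega_S)=H^0(S,\omega_S)=0$ we get $\phi|_M=0$. Thus $\phi$ factors through $N\otimes I_\xi$; projecting again to $N\otimes\omega_S$ produces a homomorphism $N\otimes I_\xi\to N\otimes\omega_S$, i.e. an element of $\Hom(I_\xi,\omega_S)=H^0(S,\omega_S)=0$, so it vanishes. The induced nonzero map $N\otimes I_\xi\to M\otimes\omega_S$ finally yields a nonzero element of $H^0(S,M\otimes N^\vee\otimes\omega_S)$, exhibiting the class $\Gamma:=c_1(M)-c_1(N)+K_S$ as effective.

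With $\Gamma\geq 0$ in hand I would verify that $N\otimes\oo_C$ genuinely contributes to the Clifford index. Sequence (\ref{bonn}) gives a composite $M\to E\to\oo_C(C)\otimes A^\vee$ which cannot vanish, for otherwise $M\hookrightarrow\oo_S^{\oplus 2}$ would force $c_1(M)\cdot H\leq 0$; hence $\oo_C(C)\otimes A^\vee\cong(M\otimes\oo_C)(D)$ for an effective divisor $D$ on $C$, whence $N\otimes\oo_C\cong A(D)$ and $h^0(C,N\otimes\oo_C)\geq h^0(C,A)=2$. For the other half I would use $\omega_C\cong(\omega_S\otimes L)\otimes\oo_C$ to identify $h^1(C,N\otimes\oo_C)=h^0(C,(\omega_S\otimes M)\otimes\oo_C)$; since $\omega_S\otimes M\cong N\otimes\oo_S(\Gamma)$ with $\Gamma$ effective, this is at least $h^0(C,N\otimes\oo_C)$, hence $\geq 2$. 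Combining, $N\otimes\oo_C$ contributes, and therefore $c=\Cliff(C)\leq\Cliff(N\otimes\oo_C)\leq c_1(M)\cdot c_1(N)+c_1(N)\cdot K_S$.

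The main obstacle is the effectivity argument via $\phi$: one must handle the torsion carefully (the maps out of $N\otimes I_\xi$ are controlled only generically, so each vanishing has to be justified via $\Hom(I_\xi,-)=H^0(-)$ for the codimension-two $\xi$, not merely on a dense open set), and treat the strictly semistable case, where the slope inequalities become equalities and the strict negativity of $K_S\cdot H$ is exactly what rescues the argument. A secondary difficulty lies in the boundary cases of the contribution step: guaranteeing $h^2(S,N)=0$ requires $c_1(N)\cdot H\geq 0$, which I would deduce from $E$ being globally generated off a finite set (so that its quotient $N$ is as well), and the inequality $h^0(C,(\omega_S\otimes M)\otimes\oo_C)\geq h^0(C,N\otimes\oo_C)$ must be checked separately when $C$ lies in the support of $\Gamma$.
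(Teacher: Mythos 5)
Your argument is correct and follows essentially the same route as the paper's own proof: both invoke Proposition \ref{seppia} and Serre duality to obtain a nonzero map $E\to E\otimes\omega_S$, kill its components through $M\to N\otimes\omega_S\otimes I_\xi$, $M\to M\otimes\omega_S$ and $N\otimes I_\xi\to N\otimes\omega_S\otimes I_\xi$ using the slope inequalities, $K_S\cdot H<0$ and $H^0(S,\omega_S)=0$ (the paper packages this as $\Hom(E,N\otimes\omega_S\otimes I_\xi)=0$ and $\Hom(E,M\otimes\omega_S)\simeq\Hom(N\otimes I_\xi,M\otimes\omega_S)$), deduce that $N^\vee\otimes M\otimes\omega_S$ is effective, conclude that $N\otimes\oo_C$ contributes to the Clifford index of $C$, and finish with the same Riemann--Roch computation. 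The only deviations are minor and harmless: the paper gets $h^0\geq 2$ from $h^0(S,N)\geq 2$ ($N$ being generically globally generated and nontrivial) rather than from your identification $N\otimes\oo_C\simeq A(D)$, and the case you flag, $C\subseteq\mathrm{Supp}(\Gamma)$, is vacuous since $\Gamma\cdot H<c_1(L)\cdot H$ forbids the irreducible curve $C$ from being a component of any effective divisor in the class $\Gamma$.
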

\begin{proof}
Being a quotient of $E:=E_{C,A}$ off a finite set, $N$ is base component free and is non-trivial since $H^2(S,N\otimes \omega_S)=0$. As a consequence, $h^0(S,N)\geq 2$. Proposition \ref{seppia} implies that $\Hom(E,E\otimes\omega_S)\neq 0$. Applying $\Hom(E,-)$ to the short exact sequence (\ref{uova}) twisted with $\omega_S$, we obtain
$$
0\to\Hom(E,M\otimes\omega_S)\to \Hom(E,E\otimes\omega_S)\to \Hom(E,N\otimes\omega_S\otimes I_\xi)\to\cdots.
$$
Apply now $\Hom(-,N\otimes\omega_S\otimes I_\xi)$ (respectively $\Hom(-,M\otimes\omega_S)$) to exact sequence (\ref{uova}), and find that $\Hom(E,N\otimes\omega_S\otimes I_\xi)=0$ (resp. $\Hom(E,M\otimes\omega_S)\simeq \Hom(N\otimes I_\xi,M\otimes\omega_S)$), hence $N^\vee\otimes M\otimes\omega_S$ is effective and $h^0(S,M\otimes \omega_S)\geq 2$. This ensures that $N\otimes\oo_C$ contributes to the Clifford index of $C$ and 
\begin{eqnarray*}
c\leq\Cliff(N\otimes\oo_C)&=&c_1(N)\cdot (c_1(N)+c_1(M))-2h^0(C,N\otimes\oo_C)+2\\
&\leq& c_1(N)^2+c_1(N)\cdot c_1(M)-2h^0(S,N)+2\\
&=&c_1(N)\cdot c_1(M)+c_1(N)\cdot K_S.
\end{eqnarray*}

\end{proof}
Now, upon fixing a nonnegative integer $l$ and a line bundle $N$ such that (\ref{slope}) is satisfied for $M:=L\otimes N^\vee$, we want to estimate the number of moduli of pairs $(C,A)$ such that the bundle $E_{C,A}$ sits in a short exact sequence like (\ref{uova}). The following construction is analogous to the one performed in \cite[Section 4]{ciotola} in the case of $K3$ surfaces. Let $\mathcal{E}_{N,l}$ be the moduli stack of extensions of type (\ref{uova}), where $l(\xi)=l$.
Having fixed $c\in H^{2*}(S,\mathbb{Z})$, we denote by $\M(c)$ the moduli stack of coherent sheaves of total Chern class $c$. We consider the natural maps $p:\E_{N,l}\to \M(c(M))\times \M(c(N\otimes I_\xi))$ and $q:\E_{N,L}\to\M(c(E))$, which send the $\mathbb{C}$-point of  $\E_{N,l}$ corresponding to extension (\ref{uova}) to the classes of $(M,N\otimes I_\xi)$ and $E$ respectively.
Notice that, since $M, N$ lie in $\Pic(S)$, the stack $\M(c(M))$ has a unique $\mathbb{C}$-point endowed with a $1$-dimensional space of automorphisms, while $\M(c(N\otimes I_\xi))$ is corepresented by the Hilbert scheme $S^{[l]}$ parametrizing $0$-dimensional subschemes of $S$ of length $l$.

We denote by $\tilde{Q}_{N,l}$ the closure of the image of $q$ and by $Q_{N,l}$ its open substack consisting of vector bundles of type $E_{C,A}$ for some $C\in\vert L\vert_s$ and $A\in W^1_d(C)$, with $d:=l+c_1(M)\cdot c_1(N)$ and $A\not\simeq \omega_S^\vee\otimes  \oo_C$ if $h^0(S,\omega_S^\vee)=2$. Let $\G_{N,l}\to Q_{N,l}$ be the Grassmann bundle whose fiber over $[E]\in Q_{N,l}(\mathbb{C})$ is $G(2,H^0(S,E))$. By construction, a $\mathbb{C}$-point of $\G_{N,l}$ corresponding to a pair $([E],\Lambda)$, with $\Lambda\in G(2,H^0(S,E))$, comes endowed with an automorphism group equal to $\mathrm{Aut}(E)$. We define $\W_{N,l}$ to be the closure of the image of the rational map $\G_{N,l}\dashrightarrow \W^1_d(\vert L\vert)$, which sends a general point $([E],\Lambda)\in\G_{N,l}(\mathbb{C})$ to the pair $(C_\Lambda,A_\Lambda)$ where the evaluation map $ev_\Lambda:\Lambda\otimes\oo_S\hookrightarrow E$ degenerates on $C_\Lambda$ and has $\oo_{C_\Lambda}(C_\Lambda)\otimes A_\Lambda^\vee$ as cokernel. The following proposition gives an upper bound for the dimension of $\W_{N,l}$.
\begin{prop}\label{conto}
Assume that general curves in $\vert L\vert_s$ have Clifford index $c$. Then, every irreducible component $\W$ of $\W^1_d(\vert L\vert_s)$ which dominates $\vert L\vert$ and is contained in $\W_{N,l}$ satisfies
$$
\dim\W\leq \dim\vert L\vert +d-c-2.
$$
\end{prop}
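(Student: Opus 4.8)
The plan is a parameter count bounding $\dim\W_{N,l}$ through the three ingredients that produce a point of it: the subscheme $\xi\in S^{[l]}$, the class realizing $E$ as the extension (\ref{uova}), and the plane $\Lambda\in G(2,H^0(S,E))$; the Clifford-index inequality of Lemma \ref{ll} is inserted only at the very end. First I would count the bundles $E$: with $N$ (hence $M=L\otimes N^\vee$) fixed, varying $\xi$ gives $\dim S^{[l]}=2l$, and for fixed $\xi$ the non-split extensions modulo isomorphism form $\mathbb{P}\Ext^1(N\otimes I_\xi,M)$. In the Harder--Narasimhan case the subsheaf $M$ is the unique saturated maximal destabilizing subsheaf, so it is recovered from $E$ and the assignment $(\xi,[e])\mapsto E$ is generically injective; hence the family of such $E$ has dimension $2l+\dim\Ext^1(N\otimes I_\xi,M)-1$, and adjoining the Grassmann fibre contributes a further $2(h^0(S,E)-2)$.

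The delicate point, which I expect to be the main obstacle, is the automorphism accounting. An endomorphism of $E$ preserves the canonical sub $M$ and, since (\ref{uova}) is non-split, acts by a single scalar on $M$ and on $N\otimes I_\xi$, up to an off-diagonal term in $\Hom(N\otimes I_\xi,M)$; thus $\dim\mathrm{Aut}(E)=1+\dim\Hom(N\otimes I_\xi,M)$. The rational map $\G_{N,l}\dashrightarrow\W^1_d(|L|)$ identifies $([E],\Lambda)$ with $([E],\psi\Lambda)$ for all $\psi\in\mathrm{Aut}(E)$, because precomposing the evaluation map with an automorphism leaves its degeneracy curve unchanged; one checks that the unipotent part acts on the relevant $\Lambda$ with trivial stabilizer (as $M$ is a line bundle, $u\cdot\sigma=0$ forces $u=0$), so the generic fibre is an orbit of dimension $\dim\Hom(N\otimes I_\xi,M)$. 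Subtracting it,
$$\dim\W_{N,l}=2l+\dim\Ext^1(N\otimes I_\xi,M)-1+2(h^0(S,E)-2)-\dim\Hom(N\otimes I_\xi,M).$$

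It remains to evaluate the cohomological terms. Serre duality gives $\Ext^2(N,M)\cong H^0(S,\omega_S\otimes N^2\otimes L^\vee)^\vee$, and this vanishes: the first Chern class $K_S+c_1(N)-c_1(M)$ has negative degree against the polarization $H$, since $(c_1(M)-c_1(N))\cdot H\geq0$ by (\ref{slope}) while $K_S\cdot H<0$ because the anticanonical pencil makes $-K_S$ effective. Consequently $\dim\Ext^1(N\otimes I_\xi,M)=l+h^1(S,L\otimes N^{-2})$ and $\dim\Hom(N\otimes I_\xi,M)=h^0(S,L\otimes N^{-2})$ uniformly in $\xi$, and $\Ext^2(N\otimes I_\xi,M)=0$ as well. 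For $h^0(S,E)$ I would use $h^2(S,E)=0$ together with the crucial vanishing $h^1(S,E)=h^0(C,A\otimes\omega_S)=0$ supplied by Lemma \ref{impo} (this is exactly where the hypothesis $A\not\simeq\omega_S^\vee\otimes\oo_C$ is needed), so that $h^0(S,E)=\chi(S,E)=\chi(S,M)+\chi(S,N)-l$. Substituting all of this into the displayed formula, the two $\Hom$ contributions cancel and the $\Ext^2$ terms vanish, and a direct Riemann--Roch computation --- using $\dim|L|=g-1-c_1(L)\cdot K_S$ and $d=c_1(M)\cdot c_1(N)+l$ --- collapses the expression to the clean equality $\dim\W_{N,l}=\dim|L|+d-\bigl(c_1(M)\cdot c_1(N)+c_1(N)\cdot K_S\bigr)-2$.

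Finally, Lemma \ref{ll} gives $c_1(M)\cdot c_1(N)+c_1(N)\cdot K_S\geq c$, which turns the equality into the desired bound $\dim\W_{N,l}\leq\dim|L|+d-c-2$. Besides the automorphism count, the step I expect to require the most care is the strictly semistable case, handled via the Jordan--H\"older rather than the Harder--Narasimhan filtration: there the destabilizing line bundle $M$ need not be unique, so $E$ no longer determines $(\xi,[e])$, and one must either quotient by the positive-dimensional family of equal-slope subsheaves or show that this extra freedom is exactly offset by the enlarged automorphism group, so that the same final equality persists.
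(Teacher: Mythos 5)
Your Harder--Narasimhan computation is, up to language, the paper's own argument: the paper organizes the same count via the moduli stack $\E_{N,l}$ of extensions, whose fibers over a fixed pair $(M,N\otimes I_\xi)$ are the quotient stacks $[\Ext^1(N\otimes I_\xi,M)/\Hom(N\otimes I_\xi,M)]$, whose fibers under the map to bundles are Quot schemes, and where the fibers of $\G_{N,l}\dashrightarrow \W_{N,l}$ are $[\mathbb{P}\mathrm{Aut}(E_{C,A})/\mathrm{Aut}(E_{C,A})]$, of stack dimension $-1$; unwinding these stack dimensions reproduces exactly your formula $2l+\dim\Ext^1-1+2(h^0(S,E)-2)-\dim\Hom$, and your cohomological inputs ($\Ext^2(N\otimes I_\xi,M)=0$ from (\ref{slope}) together with effectivity of $-K_S$, and $h^1(S,E)=0$ from Lemma \ref{impo}) coincide with the paper's. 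One small inaccuracy: $\dim\vert L\vert=g-1-c_1(L)\cdot K_S$ is an equality only when $h^1(S,L)=0$; in general $\dim\vert L\vert\geq g-1-c_1(L)\cdot K_S$, which is the inequality the paper uses. Since this error term enters with the favorable sign, your final bound is unaffected, but you should state it as an inequality (and correspondingly claim only an upper bound for $\dim\W_{N,l}$, which is all the Proposition needs).

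The genuine gap is the one you flag yourself: the strictly semistable case, which cannot be omitted because condition (\ref{slope}) defining $\W_{N,l}$ explicitly allows equality of slopes. It is closed by a rigidity statement for which you already have all the tools, and the dichotomy you propose (quotient by a positive-dimensional family of equal-slope subsheaves, or show it is offset by automorphisms) resolves more simply than you fear. The space of presentations of a fixed $E$ as an extension (\ref{uova}) is a Quot scheme whose tangent space at the given point is $\Hom(M,N\otimes I_\xi)$; a nonzero such homomorphism would make $N\otimes M^\vee$ effective with a section vanishing on $\xi$, whereas (\ref{slope}) gives $(c_1(N)-c_1(M))\cdot H\leq 0$, and since $H$ is ample an effective class of non-positive $H$-degree is trivial. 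Hence $\Hom(M,N\otimes I_\xi)=0$ unless $M\simeq N$ and $l=0$: outside this single exceptional case the presentation is rigid even at equal slopes, the composite $M\to E\to N\otimes I_\xi$ vanishes, so endomorphisms of $E$ still preserve $M$, and your computations of $\mathrm{Aut}(E)$ and of the fibers of $\G_{N,l}\dashrightarrow\W_{N,l}$ go through verbatim. (Note also that you never need injectivity of $(\xi,[e])\mapsto E$ for an \emph{upper} bound: the bundles form an image, so their dimension can only drop.) In the exceptional case $M\simeq N$, $l=0$, rationality of $S$ gives $\Ext^1(N,N)=H^1(S,\oo_S)=0$, so the only extension is the split one, $E\simeq N^{\oplus 2}$; there the presentations form a $\mathbb{P}^1$ and $\mathrm{Aut}(E)=\mathrm{GL}_2$, and a direct check (or the paper's stack bookkeeping, which is insensitive to this degeneration) shows the resulting locus has strictly smaller dimension than your generic formula predicts, so the bound persists with room to spare. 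With these additions your proof is complete and agrees with the paper's.
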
 
\begin{proof}
The fiber of $p$ over a $\mathbb{C}$-point of $\M(c(M))\times \M(c(N\otimes I_\xi))$ corresponding to $(M,N\otimes I_\xi)$ is the quotient stack 
$$
[\Ext^1(N\otimes I_\xi,M)/\Hom(N\otimes I_\xi,M)],
$$
where the action of the Hom group on the Ext group is trivial. The fiber of $q$ over $[E]\in \tilde{Q}_{N,l}(\mathbb{C})$ is the Quot-scheme $\mathrm{Quot}_S(E,P)$, where $P$ is the Hilbert polynomial of $N\otimes I_\xi$. The condition $\mu_{H}(M)\geq \mu_{H}(N)$ implies that $\Ext^2(N\otimes I_\xi,M)\simeq \Hom(M, N\otimes \omega_S\otimes I_\xi)^\vee=0$, hence the dimension of the fibers of $p$ is constant and equals 
$$
-\chi(S,N\otimes M^\vee\otimes \omega_S\otimes I_\xi)=-g+2c_1(N)\cdot c_1(M)+c_1(M)\cdot K_S+l.$$
By looking at the tangent and obstruction spaces at any point, one shows that the Quot schemes constituting the fibers of $q$ are either all $0$-dimensional or all smooth of dimension $1$; indeed, $\Hom(M, N\otimes I_\xi)=0$ unless $M\simeq N$ and $l=0$, in which case $\Ext^1(M, N\otimes I_\xi)=H^1(S,\oo_S)=0$. As a consequence, if nonempty, $Q_{N,l}$ has dimension at most $3l-2-g+2c_1(N)\cdot c_1(M)+c_1(M)\cdot K_S$.

Since the map $h_{N,l}$ forgets the automorphisms, its fiber over a pair $(C,A)\in\W_{N,l}$ is the quotient stack 
$$[U/\mathrm{Aut}(E_{C,A})],$$
where $U$ is the open subscheme of $\mathbb{P}(\Hom(E_{C,A},\oo_C(C)\otimes A^\vee))$ whose points correspond to morphisms with kernel equal to $\oo_S^{\oplus 2}$, and $\mathrm{Aut}(E_{C,A})$ acts on $U$ by composition. Using the vanishing $h^i(S,E_{C,A}\otimes \omega_S)=0$ for $i=1,2$, one checks that $$\Hom(E_{C,A},\oo_C(C)\otimes A^\vee)\simeq H^0(S,E_{C,A}\otimes E_{C,A}^\vee),$$ and $U$ is isomorphic to $\mathbb{P}\mathrm{Aut}(E_{C,A})$. Hence, the fibers of $h_{N,l}$ are stacks of dimension $-1$ and
\begin{eqnarray*}
\dim\W_{N,l}&\leq&3l-1-g+2c_1(N)\cdot c_1(M)+c_1(M)\cdot K_S+2(g-d-1-c_1(L)\cdot K_S)\\
&=&d+g-3-c_1(N)\cdot c_1(M)-c_1(N)\cdot K_S-c_1(L)\cdot K_S.
\end{eqnarray*}
The conclusion follows now from the fact that $\dim\vert L\vert\geq g-1-c_1(L)\cdot K_S$, along with Lemma \ref{ll}.
\end{proof}

\section{Proof of the main results}\label{eee}
We recall some facts about exceptional curves, that is, curves of Clifford dimension greater than $1$. Coppens and Martens (\cite{coppi}) proved that, if $C$ is an exceptional curve of gonality $k$ and Clifford dimension $r$, then $\Cliff(C)=k-3$ and $C$ possesses  a $1$-dimensional family of $g^1_k$. Furthermore, if $r\leq 9$, there exists a unique line bundle computing $\Cliff(C)$ (cf. \cite{cliff}); this is conjecturally true for any $r$. Curves of Clifford dimension $2$ are precisely the smooth plane curves of degree $\geq 5$, while the only curves of Clifford dimension $3$ are the complete intersections of two cubic surfaces in $\mathbb{P}^3$ (cf. \cite{marti}). We will use these results in the proof of the following:
\begin{prop}\label{cliffdim}
Let $L$ be a line bundle on a smooth, rational surface $S$ with an anticanonical pencil. If $g(L)\geq 4$ and a general curve $C\in \vert L\vert_s$ is exceptional, then any other curve inside $\vert L\vert_s$ has the same Clifford dimension $r$ of $C$ and either case (i) or (ii) of Theorem \ref{cli} occurs.
\end{prop}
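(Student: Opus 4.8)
The plan is to confront the parameter count of Section~\ref{ne} with the Brill--Noether geometry of exceptional curves, so as to produce a single line bundle on $S$ that computes the Clifford index on every member of the system, and then to read the two cases off the morphism it defines. Concretely, fix a general $C\in\vert L\vert_s$, of Clifford dimension $r$ and gonality $k$, so that $\Cliff(C)=c=k-3$ and, by Coppens--Martens (\cite{coppi}), $C$ carries a one--dimensional family of minimal pencils. Letting these pencils vary with $C$ produces an irreducible component $\W\subset\W^1_k(\vert L\vert)$ dominating $\vert L\vert_s$ whose general fibre over $\vert L\vert_s$ has dimension at least $1$, so that $\dim\W\geq\dim\vert L\vert+1$. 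Since $\rho(g,1,k)\leq 0$ for exceptional curves, this strictly exceeds the expected dimension $\dim\vert L\vert+\rho(g,1,k)$; by Corollary~\ref{espresso} the general bundle $E_{C,A}$ of $\W$ cannot be $\mu_H$--stable, equivalently (Proposition~\ref{seppia}) $\mu_{0,A}$ is non--injective, and hence $\W\subseteq\W_{N,l}$ for suitable $N$ and $l$. Proposition~\ref{conto} applied with $d=k$ then yields $\dim\W\leq\dim\vert L\vert+k-c-2=\dim\vert L\vert+1$, so all these inequalities are in fact equalities.

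Next I exploit the rigidity forced by equality. Equality in Proposition~\ref{conto} propagates to equality in Lemma~\ref{ll}, and chasing the inequalities of its proof gives three facts: $N\otimes\oo_C$ computes $\Cliff(C)=c$, the restriction $H^0(S,N)\to H^0(C,N\otimes\oo_C)$ is an isomorphism, and $h^1(S,N)=h^2(S,N)=0$. Using the uniqueness of the Clifford--index--computing bundle for $r\leq 9$ (\cite{cliff}), this bundle realizes the Clifford dimension, so $h^0(S,N)=h^0(C,N\otimes\oo_C)=r+1$. The decisive observation is that $N$ is a fixed element of $\Pic(S)$: restricting $0\to L^\vee\otimes N\to N\to N\otimes\oo_{C'}\to 0$ to an arbitrary $C'\in\vert L\vert_s$ and noting that the cohomology of $N$ and of $L^\vee\otimes N$ on $S$ does not depend on $C'$, I obtain that $h^0(C',N\otimes\oo_{C'})=r+1$ and $\deg(N\otimes\oo_{C'})=N\cdot L$ are constant. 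Thus $N\otimes\oo_{C'}$ is a $g^r$ of Clifford index $c$ on every $C'$; combined with semicontinuity of the Clifford index, this forces all curves in $\vert L\vert_s$ to have Clifford index $c$ and Clifford dimension $r$.

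It then remains to read off the geometry from the morphism $\phi_N\colon S\to\mathbb{P}^r$ attached to the complete system $\vert N\vert$, which restricts on each $C'$ to the map given by its Clifford--computing bundle. Since a general pair of points of $S$ lies on a smooth member of $\vert L\vert$, on which $\vert N\vert$ restricts to the very ample system embedding $C'$, the morphism $\phi_N$ is generically injective, hence birational onto its image. For $r=2$ this embeds every $C'$ as a smooth plane curve and exhibits $\phi_N\colon S\to\mathbb{P}^2$ as a birational morphism, i.e.\ a composition of blow--ups, which is case (i). For $r=3$, by Martens' description (\cite{marti}) $\phi_N(C')$ is a complete intersection of two cubics, so the image $S'=\phi_N(S)\subset\mathbb{P}^3$ is a normal cubic surface, $S\to S'$ is birational, and $C'$ is the transform of a member of $\vert-3K_{S'}\vert$, which is case (ii).

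The step I expect to be the main obstacle is the exclusion of $r\geq 4$, for which the classification of \cite{marti} gives no information. Here I would argue directly on $S$: the equalities above determine $r=\tfrac12\,N\cdot(N-K_S)$ and force $\phi_N(S)$ to be a non--degenerate rational surface of small degree in $\mathbb{P}^r$ carrying an exceptional curve $\phi_N(C')$ whose one--dimensional family of minimal pencils is cut out by the geometry of that surface. Matching this against the classification of surfaces of minimal and next--to--minimal degree, together with the Coppens--Martens structure of the minimal pencils, should leave no room for $r\geq 4$ and thereby confine the exceptional case to (i) and (ii).
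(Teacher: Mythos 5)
Your first paragraph is essentially the paper's own opening: the component $\W\subseteq\W^1_k(\vert L\vert)$ of dimension $\geq\dim\vert L\vert+1$ coming from Coppens--Martens, non-stability via Corollary \ref{espresso}, the inclusion $\W\subseteq\W_{N,l}$, and equality in Proposition \ref{conto} with $d=k$, $c=k-3$. But from that point on there is a genuine gap, precisely at the step you flag as "the main obstacle": bounding $r\leq 3$ (together with $l=0$) is not a loose end but the core of the proof, and your sketch would not close it. The equalities give $r=\tfrac12\,c_1(N)\cdot(c_1(N)-K_S)$, and birationality of $\phi_N$ gives $c_1(N)^2\geq r-1$, hence only $-c_1(N)\cdot K_S\leq r+1$; a rational surface mapping birationally onto a surface of minimal or next-to-minimal degree in $\mathbb{P}^r$ is perfectly possible for every $r$, so the classification of low-degree surfaces by itself leaves all $r\geq 4$ open. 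What the paper actually uses is the anticanonical pencil: $\deg(\omega_S^\vee\otimes\oo_C)\geq k$ forces $-c_1(M)\cdot K_S\geq k-3+l$, and then a dichotomy on $h^0(S,N\otimes\omega_S)$. If $h^0(S,N\otimes\omega_S)\geq 2$, then $M\otimes\oo_C$ contributes to $\Cliff(C)$, giving $k-3\leq\Cliff(M\otimes\oo_C)\leq 3-2l$, which combined with the ELMS inequality $k\geq 2r$ (\cite{cliff}) yields $r\leq 3-l$; if $h^0(S,N\otimes\omega_S)\leq 1$, Riemann--Roch gives $c_1(N)^2\leq 3-l$, hence $h^0(C,N\otimes\oo_C)\leq 4-l$ and again $r\leq 3-l$. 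A further separate argument, via Harbourne's classification of birational morphisms of rational surfaces (\cite{birational}), is needed to exclude $(r,l)=(2,1)$, where $\phi_N$ would be a generically $2:1$ cover of $\mathbb{P}^2$ forcing $g=3$. None of this is recoverable from your final paragraph.

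There are also two logical errors earlier in your argument. First, invoking the uniqueness of the Clifford-index-computing bundle for $r\leq 9$ to get $h^0(S,N)=r+1$ is circular: you have no bound on $r$ at that stage. The paper only ever uses the inequality $h^0(C,N\otimes\oo_C)\geq r+1$ (which is the definition of Clifford dimension) to bound $r$, and appeals to uniqueness only after $r\leq 3$ is established. Second, your constancy argument points the wrong way: exhibiting the $g^r$ $N\otimes\oo_{C'}$ of Clifford index $c$ on every $C'\in\vert L\vert_s$ gives $\Cliff(C')\leq c$, and lower semicontinuity of the Clifford index gives the very same inequality for special curves; neither yields $\Cliff(C')\geq c$, so you cannot conclude constancy of the Clifford index, let alone of the Clifford dimension. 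The paper obtains constancy of $r$ instead as a consequence of the geometric description: once every curve in $\vert L\vert_s$ is identified (via the genus count, since birationality of $\phi_N$ alone does not embed every member) with a smooth plane curve of degree $\geq 5$, respectively a complete intersection of two cubics, the classification of curves of Clifford dimension $2$ and $3$ (\cite{marti}) forces all members to have Clifford dimension $r$.
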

\begin{proof}
Since any curve of odd genus and maximal gonality has Clifford dimension $1$ (cf. \cite[Corollary 3.11]{marian2}), we can assume that general curves in $\vert L\vert_s$ have gonality $k\leq (g+2)/2$ and are exceptional. There exists a component $\W$ of $\W^1_k(\vert L\vert)$ of dimension at least $\dim\vert L\vert+1$ and, by Corollary \ref{espresso}, this is contained in $\W_{N,l}$ for some $N$ and $l$. Notice that the line bundle $N$ is nef since it is globally generated off a finite set. Furthermore, it follows from the proof of Proposition \ref{conto} that $N$ and $M:=L\otimes N^\vee$ satisfy equality in (\ref{yuppi}), that is, 
$$
k-3=c_1(M)\cdot c_1(N)+c_1(N)\cdot K_S=k-l+c_1(N)\cdot K_S;
$$ 
in particular, $N\otimes\oo_C$ computes the Clifford index of a general $C\in\vert L\vert_s$ and $h^1(S,M^\vee)=0$. Having at least a $2$-dimensional space of sections, the line bundle $\omega_S^\vee\otimes \oo_C$ has degree $\geq k$, thus $-c_1(M)\cdot K_S\geq k-3+l$. 

Assume $h^0(S,N\otimes \omega_S)\geq 2$; the restriction of $M$ to a general curve $C\in\vert L\vert_s$ contributes to its Clifford index and 
$$
k-3\leq \Cliff(M\otimes\oo_C)=c_1(M)\cdot c_1(N)+c_1(M)\cdot K_S\leq 3-2l.
$$
As $k\geq 2r$ (cf. \cite[Proposition 3.2]{cliff}), we have $r\leq 3$; if $r=3$, then $l=0$, while $r=2$ implies $l\leq 1$. Let $r=2$; since $\chi(S,N)=h^0(S,N)=h^0(C,N\otimes \oo_C)=3$ and $h^i(S,N\otimes \omega_S)=0$ for $i=1,2$ (as one can check twisting (\ref{uova}) with $\omega_S$ and taking cohomology), then $c_1(N)^2=l+1$ and $h^0(S,N\otimes \omega_S)=l\leq 1$, contradicting our assumption. Hence, the inequality $h^0(S,N\otimes \omega_S)\geq 2$ implies $r=3$ and $l=0$.
  
Assume instead that $h^0(S,N\otimes \omega_S)\leq 1$; we get $c_1(N)^2\leq 3-l$ and $h^0(C,N\otimes \oo_C)=h^0(S,N)=\chi(S,N)\leq 4-l$. Since $N\otimes \oo_C$ computes the Clifford index of $C$, then $r\leq 3$ holds in this case as well. Moreover, $l=0$ when $r=3$, and $l\leq 1$ if $r=2$.

Let $r=2$ and $l=1$; then, $c_1(N)^2=-c_1(N)\cdot K_S=2$. By \cite[Lemma 2.6, Theorem 2.11]{birational}, $N$ is base point free and not composed with a pencil, hence it defines a generically $2:1$ morphism $\phi:=\phi_N:S\to\mathbb{P}^2$ splitting into the composition of a birational morphism $\psi:S\to S'$, which contracts the finitely many curves $E_1,\cdots,E_h$ having zero intersection with $c_1(N)$, and a ramified double cover $\pi:S'\to\mathbb{P}^2$. Set $N':=\pi^*(\oo_{\mathbb{P}^2}(1))$; since $N=\psi^*(N')$ and $\psi^*$ preserves both the intersection products and the dimensions of cohomology groups, we have $c_1(N')^2=-c_1(N')\cdot K_{S'}=2$ and
$$
1=h^0(S,N\otimes\omega_S)\geq h^0(S,N\otimes\omega_S(-E_1-\cdots-E_h))=h^0(S',N'\otimes\omega_{S'}).
$$
We apply Theorem 3.3. in \cite{birational} and get $N'=\omega_{S'}^\vee$ and $K_{S'}^2=2$ (cases (b), (c), (d) of the aforementioned theorem cannot occur since they would contradict $c_1(N')^2=2$). The line bundle $N\otimes \oo_C$ is very ample because it computes $\Cliff(C)$ (cf. \cite[Lemma 1.1]{cliff}); hence, $C$ is isomorphic to $C'=\psi(C)$ and $\omega_{S'}^\vee\otimes\oo_{C'}$ is also very ample. Proceeding as in the proof of \cite[Lemma 2.6]{delpezzo} (where the ampleness of $\omega_{S'}^\vee$ is not really used), one shows that $\phi(C')\in\vert-2K_{S'}\vert$. This gives a contradiction because it implies $g(C')=g(C)=3$.

Up to now, we have shown that $r\leq 3$ and $l=0$, hence $-c_1(N)\cdot K_S=3$ and $c_1(N)^2>0$. By \cite[Proposition 3.2]{birational}, the line bundle $N$ defines a morphism $\phi_N:S\to \mathbb{P}^r$ which is birational to its image and only contracts the finitely many curves which have zero intersection with $c_1(N)$. If $r=2$, then $\phi_N$ is the blow-up of $\mathbb{P}^2$ at finitely many points (maybe infinitely near) and any curve in $\vert L\vert_s$ is the strict transform of a smooth plane curve. For $r=3$, the image of $\phi_N$ is a normal cubic surface $S'\subset \mathbb{P}^3$ and any curve in $\vert L\vert_s$ is the strict transform of a smooth curve in $\vert -3 K_{S'}\vert$, hence has Clifford dimension $3$.
\end{proof}
The following result is now straightforward.
\begin{prop}\label{general}
Let $C$ be a smooth, irreducible curve lying on a rational surface $S$ with an anticanonical pencil. If $C$ is general in its linear system, then $C$ satisfies Green's Conjecture; if moreover $C$ is not isomorphic to the complete intersection of two cubics in $\mathbb{P}^3$, then it satisfies Green-Lazarsfeld's Gonality Conjecture as well.
\end{prop}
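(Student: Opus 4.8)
The plan is to distinguish according to the Clifford dimension of a general curve $C\in\vert L\vert_s$, reducing everything to the parameter counts of Sections \ref{to} and \ref{ne} together with the structural result of Proposition \ref{cliffdim}. Suppose first that $C$ is exceptional. A smooth plane curve of Clifford dimension two has degree $\geq 5$, hence genus $\geq 6$, and a complete intersection of two cubics has genus $10$; in either situation $g(L)\geq 4$, so the hypotheses of Proposition \ref{cliffdim} are automatically met. That proposition forces every curve in $\vert L\vert_s$ to have the same Clifford dimension $r\in\{2,3\}$ and to occur as in case (i) or (ii) of Theorem \ref{cli}. Green's Conjecture for curves of Clifford dimension $2$ and $3$ was established by Loose (\cite{loose}), so it holds for $C$. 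For the Gonality Conjecture, the curves of Clifford dimension $3$ are exactly the complete intersections of two cubics excluded in the statement, whereas the Gonality Conjecture is known for smooth plane curves; this settles the exceptional case.

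Now assume $C$ has Clifford dimension $1$, so that its Clifford index equals $c=k-2$, where $k$ is the gonality and the minimal pencil computes $c$. I would check that $C$ satisfies the linear growth condition (\ref{mmm}) and then appeal to Aprodu's criterion (\cite{marian}). Fix $d$ with $k\leq d\leq g-k+2$. For general $C$ the locus $W^1_d(C)$ is the union of the fibres over $C$ of those components $\W\subset\W^1_d(\vert L\vert)$ that dominate $\vert L\vert$, so it suffices to bound $\dim\W-\dim\vert L\vert$ for each of them. When the general point $(C,A)$ of $\W$ gives a $\mu_H$-stable bundle $E_{C,A}$, Corollary \ref{espresso} yields $\dim\W=\dim\vert L\vert+\rho(g,1,d)$, and from $d\leq g-k+2$ one gets $\rho(g,1,d)=2d-g-2\leq d-k$. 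When instead $E_{C,A}$ is not $\mu_H$-stable and $A\not\simeq\omega_S^\vee\otimes\oo_C$, we have $\W\subseteq\W_{N,l}$ for suitable $N,l$, and Proposition \ref{conto} gives $\dim\W\leq\dim\vert L\vert+d-c-2=\dim\vert L\vert+d-k$. The only other possibility, arising when $h^0(S,\omega_S^\vee)=2$, is that the general point of $\W$ is $(C,\omega_S^\vee\otimes\oo_C)$; but then $\omega_S^\vee\otimes\oo_C$ is an isolated point of $W^1_d(C)$ and contributes dimension $0\leq d-k$. Pencils with base points are recovered from base-point-free ones of lower degree by adding base points, which increases the dimension by at most the number of base points and hence respects the same bound. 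Therefore $\dim W^1_d(C)\leq d-k$ for all $d$ in the range, i.e. the linear growth condition holds.

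For gonality $k\leq(g+2)/2$, Aprodu's criterion then delivers both Green's Conjecture and the Gonality Conjecture for $C$; the one remaining value is the maximal gonality $k=(g+3)/2$ in odd genus $g=2k-3$, which has Clifford dimension $1$ and for which both conjectures follow by combining Voisin's theorem (\cite{evenv,oddv}) with the result of Hirschowitz and Ramanan (\cite{ramanan}). The main obstacle is the bookkeeping that makes the two independent parameter counts agree with the Aprodu bound: the stable count produces $\rho(g,1,d)$ and the unstable count produces $d-c-2$, and it is precisely the identity $c=k-2$ in Clifford dimension $1$, together with the equality $\rho(g,1,d)=d-k$ at the endpoint $d=g-k+2$, that forces both to coincide with $d-k$. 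Beyond this, the care needed lies in discarding non-dominating components, in handling the isolated locus $A\simeq\omega_S^\vee\otimes\oo_C$ and the base-point contributions, and in covering the maximal-gonality and exceptional curves by the external results quoted above; the most delicate external input is the Gonality Conjecture for smooth plane curves, the complete intersections of two cubics being the sole curves deliberately left out of the statement.
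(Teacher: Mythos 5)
Your proposal is correct and follows essentially the same route as the paper's own proof: the exceptional case is reduced via Proposition \ref{cliffdim} to Loose's result (\cite{loose}) and Aprodu's Gonality Conjecture for Clifford dimension $2$ (\cite{marian1}), while in the Clifford dimension $1$ case the linear growth condition is verified by combining Corollary \ref{espresso} (generically stable components, giving $\rho(g,1,d)\leq d-k$) with Proposition \ref{conto} (generically unstable components, giving $d-c-2=d-k$), followed by Aprodu's criterion, with the odd-genus maximal-gonality case covered by Hirschowitz--Ramanan and Voisin. You merely make explicit some bookkeeping the paper leaves implicit (the isolated locus $A\simeq\omega_S^\vee\otimes\oo_C$, base-point contributions, and the endpoint computation $\rho(g,1,g-k+2)=d-k$), which does not change the argument.
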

\begin{proof}
We assume that $C$ has genus $g\geq 4$, Clifford dimension $1$, Clifford index $c$ and gonality $k\leq(g+2)/2$. Having fixed $k\leq d\leq g-k+2$, Corollary \ref{espresso} and Proposition \ref{conto} imply that every dominating component $\W$ of $\W^1_d(\vert L\vert)$ has dimension $\leq\dim\vert L\vert+d-k$, hence $C$ satisfies the linear growth condition (\ref{mmm}). Green's Conjecture for smooth plane curve and complete intersection of two cubics in $\mathbb{P}^3$ was established by Loose in \cite{loose}, while Aprodu proved Green-Lazarsfeld's Gonality Conjecture for curves of Clifford dimension $2$ in \cite{marian1}.
\end{proof}

We proceed with the proof of Theorem \ref{veloce}.
\begin{proof}[Proof of Theorem \ref{veloce}]
We can assume $g(L)\geq 4$. By Proposition \ref{general}, if $C\in\vert L\vert_s$ is general then $K_{g-c-1,1}(C,\omega_C)=0$, where $c=\Cliff(C)$. If we show that the group $K_{g-c-1,1}(C,\omega_C)$ does not depend (up to isomorphism) on the choice of $C$ in its linear system, by semicontinuity of the Clifford index, Green's Conjecture follows for any curve in $\vert L\vert_s$.

Set $N:=L\otimes \omega_S$; since $N$ is nef and big, the hypotheses of Theorem \ref{complex} are satisfied. Indeed, (\ref{sa}) and (\ref{sso}) for $q=1$ follow directly from the fact that $S$ is regular and has geometric genus $0$. We remark that this also implies that\begin{equation*}H^0(C,\omega_C)\simeq H^0(S,L\otimes \omega_S),\,\,\,\,\forall\,C\in\vert L\vert_s.\end{equation*}Equality (\ref{sso}) for $q=0$ is trivial since $\vert L\vert$ contains a smooth, irreducible curve. For $q\geq 2$, the line bundle $N^{q-1}$ is nef and big and the Kawamata-Viehweg Vanishing Theorem (cf. \cite[Theorem 4.3.1]{positivity}) implies that
$$
0=H^1(S, N^{ -(q-1)})^\vee\simeq H^1(S,(L\otimes\omega_S)^{q-1}\otimes\omega_S)=H^1(S,N^{q}\otimes L^\vee).
$$
By adjunction, for any curve $C\in\vert L\vert_s$, we obtain the following long exact sequence
\begin{eqnarray*}
\cdots&\to& K_{g-c-1,1}(S,L^\vee,L\otimes\omega_S)\to K_{g-c-1,1}(S,L\otimes\omega_S)\to K_{g-c-1,1}(C,\omega_C)\\
&\to& K_{g-c-2,2}(S,L^\vee,L\otimes\omega_S)\to\cdots.
\end{eqnarray*}
The group $ K_{g-c-1,1}(S,L^\vee,L\otimes\omega_S)$ trivially vanishes since $H^0(S,\omega_S)=0$. By the Vanishing Theorem (\ref{vanishing}) applied to $K_{g-c-2,2}(S,L^\vee,L\otimes\omega_S)$, we can conclude that 
\begin{equation}\label{iso}
K_{g-c-1,1}(S,L\otimes\omega_S)\simeq K_{g-c-1,1}(C,\omega_C),
\end{equation}
provided that $g-c-2\geq h^0(S, L\otimes\omega_S^{ 2})$. We can assume $h^0(S, L\otimes\omega_S^{ 2})\geq 2$ and we are under the hypothesis that the anticanonical system contains a pencil. Hence, $\omega_S^\vee\otimes\oo_C$ contributes to the Clifford index and, if $C\in\vert L\vert_s$ is general, then:
\begin{align}\label{bla}
c=\Cliff(C)\leq \Cliff(\omega_S^\vee\otimes\oo_C)&=-c_1(L)\cdot K_S-2h^0(C,\omega_S^\vee\otimes\oo_C)+2\\
&=-c_1(L)\cdot K_S-2h^0(S,\omega_S^\vee)+2.\nonumber
\end{align}
 Since $H^1(S,L\otimes \omega_S^{ 2})\simeq H^1(S,L^\vee\otimes\omega_S^\vee)^\vee=0$, we have
 \begin{eqnarray*}
 h^0(S, L\otimes\omega_S^{ 2})=\chi(S,L\otimes\omega_S^{ 2})&=&g+c_1(L)\cdot K_S+K_S^2\\
 &\leq& g-c+1-h^0(S,\omega_S^\vee)-h^1(S,\omega_S^\vee).
 \end{eqnarray*}
The conclusion is straightforward unless $\chi(S,\omega_S^\vee)=h^0(S,\omega_S^\vee)=2$; in this case the hypothesis that $\Cliff(\omega_S^\vee\otimes\oo_C)>c$ for a general $C\in\vert L\vert_s$ is necessary in order to get to the conclusion. 
\end{proof}
Finally, we prove Theorem \ref{cli}.
\begin{proof}[Proof of Theorem \ref{cli}]
By Proposition \ref{cliffdim}, we can assume that general curves in $\vert L\vert_s$ have Clifford dimension $1$; we denote by $c$ their Clifford index. 

The isomorphism (\ref{iso}), valid for any curve $C\in \vert L\vert_s$, together with Green and Lazarsfeld's result stating that $K_{p,1}(C,\omega_C)\neq 0$ for $p\leq g-\Cliff(C)-2$, implies the constancy of the Clifford index. By semicontinuity of the gonality, all curves in $\vert L\vert_s$ have Clifford dimension $1$ and the same gonality.
\end{proof}

\begin{rem}\label{finale}
Knutsen \cite{leopold} proved that, if a line bundle $L$ on a Del Pezzo surface $S$ satisfies $g(L)\geq 4$, then the Clifford index curves in $\vert L\vert_s$ is constant unless $S$ has degree $1$, the line bundle $L$ is ample, $c_1(L)\cdot E\geq 2$ for every $(-1)$-curve $E$ if $c_1(L)^2\geq 8$, and there is an integer $k\geq 3$ such that $-c_1(L)\cdot K_S=k$, $c_1(L)^2\geq 5k-8\geq 7$ and $c_1(L)\cdot\Gamma\geq k$ for every smooth rational curve such that $\Gamma^2=0$. In this case, the curves through the base point of $\omega_s^\vee$ form a family of codimension $1$ in $\vert L\vert_s$, have gonality $k-1$ and Clifford index $k-3$, while a general curve $C\in \vert L\vert_s$ has gonality $k$ and Clifford index $k-2$; in particular, $\omega_S^\vee\otimes\oo_C$ computes $\Cliff(C)$. The easiest example where the gonality is not constant  is provided by $L=\omega_S^{-n}$ for $n\geq 3$.

Vice versa, if $S$ has degree $1$ and $\Cliff(\omega_S^\vee\otimes\oo_C)=\Cliff(C)$ for a general $C\in\vert L\vert_s$, one recovers Knutsen's conditions because, if $\Gamma$ is a smooth rational curve with $\Gamma^2=0$, then $\oo_S(\Gamma)$ cuts out a base point free pencil on $C$ and, if $c_1(L)^2\geq 8$ and $E$ is a $(-1)$-curve, then $\oo_C(-K_S+E)$ defines a net on $C$ which contributes to its Clifford index. This shows that the extra hypothesis we make when $\chi(S,\omega_S^\vee)=h^0(S,\omega_S^\vee)=2$ is unavoidable.
\end{rem}

\vfill

\end{document}